\newcommand\reallywidehat[1]{\arraycolsep=0pt\relax%
\begin{array}{c}
\stretchto{
  \scaleto{
    \scalerel*[\widthof{\ensuremath{#1}}]{\kern-.5pt\bigwedge\kern-.5pt}
    {\rule[-\textheight/2]{1ex}{\textheight}} %WIDTH-LIMITED BIG WEDGE
  }{\textheight} % 
}{0.5ex}\\           % THIS SQUEEZES THE WEDGE TO 0.5ex HEIGHT
#1\\                 % THIS STACKS THE WEDGE ATOP THE ARGUMENT
\rule{-1ex}{0ex}
\end{array}
}
\newlength{\defbaselineskip}
\newcommand{\setlinespacing}[1]%
           {\setlength{\baselineskip}{#1 \defbaselineskip}}
\numberwithin{equation}{section}
\newtheorem{thm}{Theorem}[section]
\newtheorem{lem}[thm]{Lemma}
\theoremstyle{definition}
\theoremstyle{remark}
\numberwithin{equation}{section}
\patchcmd{\subsubsection}{\itshape}{\bfseries}{}{}
\begin{document}

\title[The radius of spatial analyticity]
{On the radius of spatial analyticity for the Majda-Biello and Hirota-Satsuma systems}

\author{Seongyeon Kim and Ihyeok Seo}

%\thanks{}

\subjclass[2010]{Primary: 35A20; Secondary: 35Q53}
\keywords{Spatial analyticity, KdV equations.}

\address{Department of Mathematics Education, Jeonju University, Jeonju 55069, Republic of Korea}
\email{sy\_kim@jj.ac.kr}

\address{Department of Mathematics, Sungkyunkwan University, Suwon 16419, Republic of Korea}
\email{ihseo@skku.edu}

\begin{abstract}
We investigate the persistence of spatial analyticity for solutions to the Majda-Biello and Hirota-Satsuma systems with analytic initial data. This result is the first to establish analyticity persistence in such coupled KdV systems.
\end{abstract}

\maketitle

\section{Introduction}\label{sec1}
In this paper, we are concerned with the persistence of spatial analyticity for the solutions of the following Majda-Biello and Hirota-Satsuma systems, given analytic initial data:

\medskip

\begin{itemize}
\item Majda-Biello system:
$$
\begin{cases}
u_t+u_{xxx}=-vv_x,\\
v_t+a_2v_{xxx}=-(uv)_x,\\
(u,v)|_{t=0}=(u_0,v_0),
\end{cases}
$$
where $a_2\neq0$. This system proposed by Majda and Biello in \cite{MB} serves as a reduced asymptotic model for the investigation of the nonlinear resonant interactions between long-wavelength equatorial Rossby waves and barotropic Rossby waves,

\medskip

\item Hirota-Satsuma system:
$$
\begin{cases}
u_t+a_1u_{xxx}=-6a_1uu_x+c_{12}vv_x,\\
v_t+v_{xxx}=-3uv_x,\\
(u,v)|_{t=0}=(u_0,v_0),
\end{cases}
$$
where $a_1\ne0$. Hirota and Satsuma introduced this system in \cite{HS} to describe the interaction between two long waves exhibiting distinct dispersion relations.
\end{itemize}

\medskip

\noindent
Here, $u(t,x)$ and $v(t,x)$ are real-valued unknown functions of the two variables $x,t\in\mathbb{R}$.
To treat both systems under a unified framework, we adopt the following formulation:
\begin{equation}\label{ssys}
\begin{cases}
u_t + a_1 u_{xxx} = c_{11}uu_x + c_{12}vv_x , \\
v_t + a_2 v_{xxx} =  c_{21}u_x v + c_{22}uv_x, \\
(u,v)|_{t=0} = (u_0,v_0),
\end{cases}
\end{equation}
where $a_1a_2\neq0$.
This system is said to be in \textit{divergence form} if $c_{21}=c_{22}$, and in \textit{non-divergence form} otherwise.
The Majda-Biello system is in divergence form and corresponds to a special case of \eqref{ssys} with the coefficients:
\begin{equation}\label{mb}
    a_1=1,\quad c_{11}=0,\quad c_{12}=-1,\quad c_{21}=c_{22}=-1.
\end{equation}
The Hirota-Satsuma system is in non-divergencs form and corresponds to a special case of \eqref{ssys} with the coefficients:
\begin{equation}\label{hs}
    a_2=1,\quad c_{11}=-6a_1,\quad c_{21}=0,\quad c_{22}=-3.
\end{equation}

While the well-posedness theory in Sobolev spaces is being actively studied, the spatial analyticity remains uncharted. Hereafter, our attention is directed towards the situation where we are presented real-analytic initial data with uniform radius of analyticity $\sigma_0>0$, so that there is a holomorphic extension to a complex strip
$$S_{\sigma_0}=\{x+iy:x,y\in\mathbb{R},\,|y|<\sigma_0\}.$$
Now, it is natural to ask whether this property may be continued analytically to a complex strip $S_{\sigma(t)}$ for all later times $t$, but with a possibly smaller and shrinking radius of analyticity $\sigma(t)>0$.

This class of problems was initially introduced by Kato and Masuda in their work \cite{KM}. It has since garnered significant attention in the context of various dispersive partial differential equations, including nonlinear Schr\"odinger equations (see, e.g., \cite{BGK2, Te1, AKS1}) and KdV equations (see, e.g., \cite{BG, BGK, SS, Te2, ST2, HW, PS, AKS2, ZLD, W, LW}). 
Demonstrating spatial analyticity for coupled equations, while simultaneously controlling all of them, poses a more challenging task. 
To the best of our knowledge, the study of spatial analyticity for such nonlinear dispersive systems has been limited to specific cases such as the Dirac-Klein-Gordon system in both one and two-dimensional spaces \cite{ST1, S1}, the Klein-Gordon-Schr\"odinger system \cite{AKS3}, a coupled system of modified KdV equations \cite{FH}, and the coupled system of BBM equations \cite{TTB}. However, there are currently no results addressing this issue for the Majda-Biello and Hirota-Satsuma systems. 
Motivated by this gap in the literature, our aim in this work is to establish the spatial analyticity for the systems.

Beyond merely filling this gap, it is also important to understand how spatial analyticity behaves in nontrivially coupled KdV-type systems, where nonlinear interactions between components introduce additional analytical difficulties. This calls for a systematic investigation of analyticity persistence in such coupled models.

In this work, we develop a unified analytic framework to study the persistence and long-time behavior of spatial analyticity for coupled KdV-type systems. In contrast to existing results obtained for specific models, the present approach provides a systematic treatment applicable to multiple coupled equations under a common structural formulation.

Among various coupled KdV-type systems, we consider the Majda–Biello and Hirota–Satsuma systems, both of which fall within the unified formulation \eqref{ssys}. These two models represent structurally distinct classes: the Majda–Biello system is in divergence form, whereas the Hirota–Satsuma system is in non-divergence form. They therefore serve as natural examples illustrating that the same analytic framework applies to both types of structures. Moreover, both systems arise in physically relevant contexts and admit a well-established well-posedness theory in Sobolev spaces, which makes them suitable for investigating  spatial analyticity in the global-in-time regime.

The Gevrey space, denoted $G^{\sigma,s}(\mathbb{R})$, $\sigma\ge0$, $s\in\mathbb{R}$, is a suitable function space to study analyticity of solution. In our case, it will be used with the norm
$$\|f\|_{G^{\sigma,s}}=\|e^{\sigma|D|}\langle D\rangle^sf\|_{L^2},$$
where $\langle D\rangle=1+|D|$ with $D=-i\partial_x$. According to the Paley-Wiener theorem\footnote{The proof given for $s=0$ in \cite{K} applies also for $s\in\mathbb R$ with some obvious modifications.} (see e.g. \cite{K}, p. 209), a function $f$ belongs to $G^{\sigma,s}$ with $\sigma>0$
if and only if it is the restriction to the real line of a function $F$ which is holomorphic in the strip $S_\sigma=\{x+iy:x,y\in\mathbb R,\,|y|<\sigma\}$ and satisfies $\sup_{|y|<\sigma}\|F(x+iy)\|_{H_x^s}<\infty$. Therefore every function in $G^{\sigma,s}$ with $\sigma>0$ has an analytic extension to the strip $S_\sigma$. 

Based on this property of the Gevrey space, which is the key to studying spatial analyticity of solution, our result below gives a lower bound $|t|^{-4/3-\epsilon}$ on the radius of analyticity $\sigma(t)$ of the solution to \eqref{ssys} as the time $t$ tends to infinity. Since we have two unknown functions in hand in the system, we define the space $\mathcal G^{\sigma,s}:=G^{\sigma,s}\times G^{\sigma,s}$ with $\|(f_1,f_2)\|_{\mathcal G^{\sigma,s}}:=\max\{\|f_1\|_{G^{\sigma,s}},\|f_2\|_{G^{\sigma,s}}\}$.

\begin{thm}\label{thm1}
Let $u,v$ be the global solution of \eqref{ssys} with $(u_0,v_0)\in\mathcal G^{\sigma_0,s}(\mathbb R)$ for some $\sigma_0>0$ and $s\in\mathbb R$. If the coefficients $c_{ij}$ $(i,j=1,2)$ fall into any of the cases vary depending on the value of $a_2/a_1$ as in Table \ref{table3}, then for all $t\in\mathbb R$
$$\left(u(t),v(t)\right)\in \mathcal G^{\sigma(t),s}(\mathbb R)$$
with $\sigma(t)\ge c|t|^{-4/3-\epsilon}$ for any $\epsilon>0$ as $|t|\rightarrow\infty$.
Here, $c>0$ is a constant depending on $\|u_0\|_{G^{\sigma_0,s}(\mathbb R)}$ and $\|v_0\|_{G^{\sigma_0,s}(\mathbb R)}$.
\end{thm}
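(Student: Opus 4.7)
The plan is to follow the standard three-step strategy for persistence of analyticity — local Gevrey well-posedness, an almost conservation law in the Gevrey norm, and an iteration that converts the local bound into a quantified global statement — adapted to the coupled system \eqref{ssys}. The starting point is the substitution $U=e^{\sigma|D|}u$, $V=e^{\sigma|D|}v$ which, by the Paley--Wiener characterization recalled in the introduction, turns the Gevrey norm into the Sobolev norm $\|U\|_{H^s}$ and maps \eqref{ssys} to a system in $(U,V)$ with the same linear parts but modified nonlinearities $e^{\sigma|D|}(uu_x)$ and so on. First I would prove local well-posedness of \eqref{ssys} in $\mathcal G^{\sigma_0,s}$ on a time interval $[0,T_0]$ whose length depends only on $\|(u_0,v_0)\|_{\mathcal G^{\sigma_0,s}}$; via the substitution this reduces to Bourgain-space well-posedness for the transformed system. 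Since $a_1$ and $a_2$ enter as distinct dispersion relations, this uses $X^{s,b}$ spaces adapted separately to each phase $\phi_j(\xi)=a_j\xi^3$ together with coupled bilinear estimates, and the coefficient constraints encoded in Table \ref{table3} are precisely the (non-)resonance conditions under which these bilinear estimates close.

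The heart of the argument is an almost conservation law of the form
$$
\sup_{t\in[0,T_0]}\|(u,v)(t)\|_{\mathcal G^{\sigma,s}}^2 \le \|(u_0,v_0)\|_{\mathcal G^{\sigma,s}}^2 + C\sigma^\theta P\bigl(\|(u_0,v_0)\|_{\mathcal G^{\sigma,s}}\bigr),\qquad \sigma\in(0,\sigma_0],
$$
for a suitable exponent $\theta>0$ and a polynomial $P$. I would derive it by computing the time derivatives of $\|U(t)\|_{H^s}^2$ and $\|V(t)\|_{H^s}^2$, integrating by parts, and rewriting each nonlinear contribution as a commutator of the type $e^{\sigma|D|}\partial_x(fg)-\partial_x\bigl((e^{\sigma|D|}f)(e^{\sigma|D|}g)\bigr)$. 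A careful frequency decomposition, combined with the bilinear estimates from the local theory, shows that this commutator gains a factor $\sigma^\theta$ relative to the bare bilinear estimate, because the exponential weight is close to multiplicative outside a narrow region of symmetric frequencies where the dispersive relation suppresses the interaction.

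The third step is iteration. Starting from radius $\sigma_0$ and data $(u_0,v_0)$, one applies the local theory and the above inequality on successive intervals of length $T_0$. After $n$ steps the Gevrey norm has grown by at most $nC\sigma^\theta$, which stays below twice the initial norm as long as $n\le c\sigma^{-\theta}$. Hence at time $T=nT_0$ the solution still lies in $\mathcal G^{\sigma,s}$ provided $\sigma\le cT^{-1/\theta}$. The threshold value $\theta=3/4$ (with an arbitrarily small loss) is what I would aim for, since it produces exactly the claimed lower bound $\sigma(t)\gtrsim|t|^{-4/3-\epsilon}$; a matching bound for $t<0$ follows from time reversibility.

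The hardest part will be the almost conservation law and, underlying it, the bilinear estimates on which it rests. Unlike a single KdV equation, each nonlinear term pairs frequencies from two different dispersive flows, so the resonance function takes the form $a_2\xi_1^3+\xi_2^3-a_j(\xi_1+\xi_2)^3$, whose zero set depends sensitively on the ratio $a_2/a_1$ — this is the origin of the case analysis in Table \ref{table3}. A further complication occurs in the non-divergence case $c_{21}\ne c_{22}$ (Hirota--Satsuma), where the second nonlinearity $c_{21}u_xv+c_{22}uv_x$ cannot be written as an $x$-derivative of a quadratic quantity, so the corresponding commutator estimate with the required $\sigma^{\theta}$ gain must be proved by a more delicate frequency splitting that takes the asymmetry into account. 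Closing this estimate uniformly in $\sigma$, across every regime of $a_2/a_1$ covered by Table \ref{table3}, is the technically decisive point of the proof.
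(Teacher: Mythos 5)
Your overall architecture --- local well-posedness in Gevrey--Bourgain spaces, an almost conservation law with a $\sigma^{\rho}$ gain, and an iteration giving $\sigma(t)\gtrsim |t|^{-1/\rho}$ for any $\rho<3/4$ --- is exactly the paper's strategy, and your identification of the $3/4$ threshold and of Table \ref{table3} as encoding which bilinear estimates close is correct. There is, however, a genuine gap in the almost conservation law as you describe it. You claim that after computing $\frac{d}{dt}\|U\|^2$ and $\frac{d}{dt}\|V\|^2$ and integrating by parts, ``each nonlinear contribution'' can be rewritten as a commutator $e^{\sigma|D|}\partial_x(fg)-\partial_x\bigl((e^{\sigma|D|}f)(e^{\sigma|D|}g)\bigr)$ carrying a factor $\sigma^{\theta}$. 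That is true of the self-interaction $\int U\,UU_x\,dx$, which vanishes outright, but it is false for the cross terms: $\frac{d}{dt}\|U\|_{L^2}^2$ produces $c_{12}\int_{\mathbb R}UVV_x\,dx$ and $\frac{d}{dt}\|V\|_{L^2}^2$ produces $(c_{22}-2c_{21})\int_{\mathbb R}UVV_x\,dx$ after integration by parts. These are genuine trilinear terms in $U,V$ themselves, not commutators; they carry no power of $\sigma$ and cannot be absorbed. The paper's way out is structural: under the algebraic condition $c_{12}=\eta(c_{22}-2c_{21})$ (satisfied by both Majda--Biello and Hirota--Satsuma for suitable $\eta$), the weighted combination $\|U\|_{L^2}^2+\eta\|V\|_{L^2}^2$ makes the two cross terms cancel exactly, mirroring a conserved quantity of the original system, and only the commutators $f_1,f_2$ survive to be estimated. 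Without this cancellation your inequality does not close.

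Two smaller points. First, the paper runs the energy argument only at $s=0$, where multiplying by $U$ and $V$ gives the exact cancellations above; general $s\in\mathbb R$ is then recovered from the embedding $\mathcal G^{\sigma,s}\subset\mathcal G^{\sigma',s'}$. Doing the computation directly in $H^s$ as you propose would create further non-vanishing terms from commuting $\langle D\rangle^{s}$ past $U$ and $V$ that you have not accounted for. Second, the $\sigma^{\rho}$ gain does not come from a dispersive suppression near symmetric frequencies; it is the elementary pointwise bound $e^{\sigma|\xi_1|}e^{\sigma|\xi_2|}-e^{\sigma|\xi_1+\xi_2|}\lesssim_{\rho}\sigma^{\rho}\langle\xi_1\rangle^{\rho}\langle\xi_2\rangle^{\rho}\langle\xi_1+\xi_2\rangle^{-\rho}e^{\sigma|\xi_1|}e^{\sigma|\xi_2|}$, which trades $\sigma^{\rho}$ for $\rho$ derivatives; the dispersion enters only afterwards, through the bilinear estimates at regularity $-\rho>-3/4$ that absorb those derivatives, and this is precisely why $\rho<3/4$ is the threshold.
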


It follows from \eqref{mb} and \eqref{hs} that this theorem applies to a wide range of physically relevant parameter regimes:
\begin{itemize}
    \item \textbf{Majda–Biello system:} when $a_2 < 0$, $a_2 = 1$, or $a_2 > 4$.
    \item \textbf{Hirota–Satsuma system:} when $a_1 < \tfrac{1}{4}$ ($a_1\neq0$).
\end{itemize}

It should be also noted that the existence of the global solution in the theorem is guaranteed as long as the Cauchy problem \eqref{ssys} is globally well-posed in $H^s$ for some $s\in\mathbb{R}$.\footnote{The well-posedness of the coupled KdV-KdV systems with initial data in Sobolev spaces $H^s(\mathbb R)\times H^s(\mathbb R)$ has been studied by many authors. For the latest result as well as brief history, we refer the reader to \cite{YZ} and the references therein. In particular, the Majda-Biello system is globally well-posed for $s>-3/4$ if $a_2=1$, for $s\ge1$ if $a_2= 4$, and for $s\ge0$ in the other cases; the Hirota-Satsuma system for $s\ge1$ if $a_1=1/4$ and $c_{12}>0$, and for $s\ge0$ if $a_1\notin\{1/4,1\}$ and $c_{12}>0$.}
Indeed, observe that $G^{0,s}$ coincides with the Sobolev space $H^s$ and the embeddings
\begin{equation}\label{emb}
G^{\sigma,s}\subset G^{\sigma^\prime,s^\prime},\ \mathcal G^{\sigma,s}\subset \mathcal G^{\sigma^\prime,s^\prime}
\end{equation}
hold for all $0\le\sigma'<\sigma$ and $s,s'\in\mathbb{R}$. This in turn signifies that, with $\sigma'=0$ and the assumption that \eqref{ssys} is globally well-posed in $H^{s'}(=G^{0,s'})$ for some $s'\in\mathbb{R}$, \eqref{ssys} has a unique global solution, given initial data $(u_0,v_0)\in\mathcal G^{\sigma_0,s}$ for any $\sigma_0>0$ and $s\in\mathbb R$.

Therefore, spatial analyticity is meaningful only when the existence of a global-in-time solution is guaranteed. The specific ranges of the parameters appearing in the main results and auxiliary lemmas of this paper are naturally constrained by the currently known global well-posedness theory for the system \eqref{ssys}. In particular, they precisely cover the 
range in which the existence of global solutions is known in Sobolev spaces.

\begin{table}
\setlength{\tabcolsep}{7pt}
\renewcommand{\arraystretch}{1.5}
\begin{tabular}{|c|c|}
\hline
\rowcolor[HTML]{EFEFEF} 
\multicolumn{1}{|c|}{\cellcolor[HTML]{EFEFEF}$\frac{a_2}{a_1}$} & \multicolumn{1}{c|}{\cellcolor[HTML]{EFEFEF}Coefficients $c_{ij}$} \\ \hline
$\frac{a_2}{a_1}<0$                   & arbitrary                \\ \hline
$0<\frac{a_2}{a_1}\neq1\leq4$ & $c_{12}=c_{21}=c_{22}=0$          \\ \hline
$\frac{a_2}{a_1}=1$                   & $c_{21}=c_{22}$           \\ \hline
$\frac{a_2}{a_1}>4$                   & arbitrary                 \\ \hline
\end{tabular}
\vspace{0.5cm}
\caption{The coefficients $c_{ij}$ $(i,j=1,2)$ vary depending on the range of $a_2/a_1$.}\label{table3}
\end{table}

\medskip

The structure of this paper is outlined as follows: In Section \ref{sec2}, we introduce essential function spaces such as Bourgain and Gevrey-Bourgain spaces, along with their basic properties, which will be utilized in subsequent sections.
Section \ref{sec3} is dedicated to presenting bilinear estimates in Gevrey-Bourgain spaces. By employing a contraction argument based on these estimates, we establish that for a short time interval $0 \leq t \leq \delta$, where $\delta > 0$ depends on the norm of the initial data, the radius of analyticity remains strictly positive. Additionally, we demonstrate an approximate conservation law, even though the exact conservation of the $\mathcal G^{\sigma_0,s}$-norm of the solution does not hold. This approximation is crucial for controlling the solution's growth within the time interval $[0, \delta]$, measured in the data norm $\mathcal G^{\sigma_0,s}$.
In Section \ref{sec4}, we delve into the proofs of the local result and the almost conservation law.
Section \ref{sec5} is dedicated to finalizing the proof of Theorem \ref{thm1} by iterating the local result based on the conservation law.

\medskip

Throughout this paper, %the letter $C$ stands for a positive constant which may be different at each occurrence.
we write $A\lesssim B$ to mean $A\le CB$ for a positive constant $C$, and $A\sim B$ to mean $B\lesssim A\lesssim B$.

\section{Function spaces}\label{sec2}
In this section, we present certain function spaces along with their basic properties. These spaces will be employed in subsequent sections to prove Theorem \ref{thm1}.

For $s,b\in\mathbb R$ and a polynomial $p$ on $\mathbb{R}$, $X_p^{s,b}=X_p^{s,b}(\mathbb R^{1+1})$ denotes the Bourgain space defined by the norm
$$\|f\|_{X_p^{s,b}}=\| \langle\xi\rangle^s\langle\tau-p(\xi)\rangle^b
\hat f(\tau,\xi)\|_{L^2_{\tau,\xi}},$$
where $\langle\cdot\rangle=1+|\cdot|$ and $\hat f$ is the space-time Fourier transform of $f$ given by
$$\hat f(\tau,\xi)=\int_{\mathbb R^{2}}e^{-i(t\tau+x\xi)}f(t,x)dtdx.$$
For simplicity, we write $X_u^{s,b}$ and $X_v^{s,b}$ for $X_p^{s,b}$ when $p(\xi)=a_1\xi^3$ and $p(\xi)=a_2\xi^3$, respectively.
The restriction of the Bourgain space, denoted $X_{p,\delta}^{s,b}$, to a time slab $(0,\delta )\times\mathbb R$
is a Banach space when equipped with the norm
$$\|f\|_{X_{p,\delta}^{s,b}}=\inf\big\{\|g\|_{X_p^{s,b}}:g=f\ \text{on}\ (0,\delta)\times\mathbb R\big\}.$$
We also introduce the Gevrey-Bourgain space\footnote{Bourgain \cite{B} previously employed the Gevrey-modification of Bourgain spaces to investigate the persistence of analyticity in solutions of the Kadomtsev-Petviashvili equation. He demonstrated that the radius of analyticity remains positive for the entire duration of the solution's existence. This argument possesses generality and applies to a class of dispersive equations; however, it does not yield a lower bound on the radius $\sigma(t)$ as $|t|\rightarrow\infty$.} $X_p^{\sigma,s,b}=X_p^{\sigma,s,b}(\mathbb{R}^{1+1})$ defined by the norm
$$
\|f\|_{X_p^{\sigma,s,b}}=\|e^{\sigma| \partial_x|}f\|_{X_p^{s,b}}.
$$
Its restriction $X_{p,\delta}^{\sigma,s,b}$ to a time slab $(0,\delta)\times\mathbb R$ is defined in a similar way as earlier, and when $\sigma=0$, $X_p^{\sigma,s,b}$ and $X_{p,\delta}^{\sigma,s,b}$ coincide with $X_p^{s,b}$ and $X_{p,\delta}^{s,b}$, respectively. We define the space $\mathcal{X}_{(u,v)}^{s,b}:=X_u^{s,b}\times X_v^{s,b}$ with the norm $\|(f_1,f_2)\|_{\mathcal{X}_{(u,v)}^{s,b}}:=\max\{\|f_1\|_{X_u^{s,b}},\|f_2\|_{X_v^{s,b}}\}$, and we do similar to all the derived forms of the Bourgain space analogously.

We now list some basic properties of these spaces. For the case where $\sigma=0$, the proofs of the first two lemmas presented below can be found in Section 2.6 of \cite{T1}, while the third lemma can be derived using the same argument as that used in Lemma 3.1 of \cite{CKSTT}. Through the substitution $f\rightarrow e^{\sigma|\partial_x|}f$, the properties of $X_p^{s,b}$ and its restrictions can be extended to $X_p^{\sigma,s,b}$.

\begin{lem}\label{lem00}
Let $\sigma\ge0$, $s\in\mathbb R$ and $b>1/2$. Then, $X_p^{\sigma,s,b}\subset C(\mathbb R,G^{\sigma,s})$ and
$$\sup_{t\in\mathbb R}\|f(t)\|_{G^{\sigma,s}}\le C\|f\|_{X_p^{\sigma,s,b}},$$
where $C>0$ is a constant depending only on $b$.
\end{lem}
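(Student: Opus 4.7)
The plan is to reduce the assertion to the case $\sigma = 0$ via the isometry $f \mapsto e^{\sigma|\partial_x|} f$ already highlighted in the excerpt: this operator carries $X_p^{\sigma,s,b}$ onto $X_p^{s,b}$ and, pointwise in $t$, carries $G^{\sigma,s}$ onto $H^s$. It therefore suffices to establish $X_p^{s,b} \subset C(\mathbb{R}, H^s)$ together with the bound $\sup_{t\in\mathbb R}\|f(t)\|_{H^s} \le C_b \|f\|_{X_p^{s,b}}$ for $b > 1/2$, and then transport the statement back.

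For the norm estimate I would write the spatial Fourier transform at a fixed time $t$ via Fourier inversion in $\tau$,
$$\mathcal{F}_x f(t,\xi) \,=\, \frac{1}{2\pi}\int_{\mathbb{R}} e^{it\tau}\,\widehat{f}(\tau,\xi)\,d\tau,$$
and then insert the factor $\langle \tau - p(\xi)\rangle^{b}\langle \tau - p(\xi)\rangle^{-b}$ to apply the Cauchy--Schwarz inequality in $\tau$. The hypothesis $b > 1/2$ enters exactly at this point, ensuring that
$$\int_{\mathbb{R}} \langle \tau - p(\xi)\rangle^{-2b}\,d\tau \,=\, C_b^2$$
is finite and independent of $\xi$ (by translation in $\tau$). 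This yields
$$|\mathcal{F}_x f(t,\xi)|^2 \,\le\, C_b^2 \int_{\mathbb{R}} |\langle \tau - p(\xi)\rangle^{b}\,\widehat{f}(\tau,\xi)|^2\,d\tau,$$
and multiplying by $\langle \xi\rangle^{2s}$ and integrating in $\xi$ then produces precisely $C_b\|f\|_{X_p^{s,b}}$ on the right-hand side, uniformly in $t$.

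Continuity in $t$ would follow from two applications of dominated convergence. For $t_n \to t$, pointwise convergence of $e^{it_n\tau} \to e^{it\tau}$ together with the same Cauchy--Schwarz-type domination yields $\mathcal{F}_x f(t_n,\xi) \to \mathcal{F}_x f(t,\xi)$ for almost every $\xi$; a second application, dominated by the $L^1_\xi$ majorant $C_b^2 \langle \xi\rangle^{2s}\int|\langle \tau - p(\xi)\rangle^b\widehat{f}(\tau,\xi)|^2\,d\tau$, would give $\|f(t_n)-f(t)\|_{H^s}\to 0$. I do not anticipate any real obstacle here; the only points that require care are (i) that the reduction to $\sigma=0$ is an exact isometry on both sides so the constant genuinely depends only on $b$, and (ii) that the exponent $b > 1/2$ is used precisely to make $\langle \cdot \rangle^{-2b}$ integrable on $\mathbb{R}$.
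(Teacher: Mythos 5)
Your proposal is correct and takes essentially the same route as the paper: the paper reduces to $\sigma=0$ via the substitution $f\mapsto e^{\sigma|\partial_x|}f$ and then cites the standard embedding $X_p^{s,b}\subset C(\mathbb R,H^s)$ for $b>1/2$ from Section 2.6 of \cite{T1}, and your Fourier-inversion plus Cauchy--Schwarz argument (with $\int\langle\tau-p(\xi)\rangle^{-2b}d\tau<\infty$) is exactly the standard proof of that cited fact, with continuity handled by dominated convergence as usual.
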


\begin{lem}\label{lem2}
Let $\sigma\ge0$, $s\in\mathbb R$, $-1/2<b<b'<1/2$ and $\delta>0$. Then
$$\|f\|_{X_{p,\delta}^{\sigma,s,b}}\le C\delta^{b^\prime-b}\|f\|_{X_{p,\delta}^{\sigma,s,b^\prime}},$$
where $C>0$ is a constant depending only on $b$ and $b'$.
\end{lem}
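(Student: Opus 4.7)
The plan is to reduce the inequality, through two standard isometric identifications, to a one-variable Sobolev time-cutoff estimate. First I would observe that the Fourier multiplier $f\mapsto e^{\sigma|\partial_x|}f$ is an isometry of $X_p^{\sigma,s,b}$ onto $X_p^{s,b}$ that commutes with restriction to the slab $(0,\delta)\times\mathbb R$; hence the general case follows from $\sigma=0$, to which I now restrict. Let $U(t)$ be the linear propagator associated with $p$, acting on the Fourier side by $e^{itp(\xi)}$. A change of variables $\tau\mapsto\tau+p(\xi)$ in the space-time Fourier transform gives the isometry
$$\|F\|_{X_p^{s,b}}=\|U(-t)F\|_{H^b_t(H^s_x)}.$$
Since $U(-t)$ acts pointwise in $t$, it commutes with restriction to $(0,\delta)$, and the same identity descends to the restriction norms. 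Taking Plancherel in $x$ and working fiber-wise in the spatial frequency reduces the lemma to the scalar one-variable statement
$$\|g\|_{H^b(0,\delta)}\le C\delta^{b'-b}\|g\|_{H^{b'}(0,\delta)},\qquad -1/2<b<b'<1/2.$$

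For this scalar inequality I would fix a bump $\eta\in C_c^\infty(\mathbb R)$ with $\eta\equiv 1$ on $[0,1]$ and $\mathrm{supp}\,\eta\subset[-1,2]$. Given $g$, pick an extension $\tilde g\in H^{b'}(\mathbb R)$ with $\|\tilde g\|_{H^{b'}}\le 2\|g\|_{H^{b'}(0,\delta)}$; since $\eta(\cdot/\delta)\tilde g$ is also an admissible extension of $g$, the problem reduces to the multiplication inequality
$$\|\eta(\cdot/\delta)h\|_{H^b}\le C\delta^{b'-b}\|h\|_{H^{b'}},\qquad 0<\delta\le 1,$$
for all $h\in H^{b'}(\mathbb R)$ (the case $\delta\ge 1$ is trivial). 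I would establish this by complex interpolation between the uniform-in-$\delta$ multiplier bound $\|\eta(\cdot/\delta)h\|_{H^b}\lesssim\|h\|_{H^b}$ (valid for $0\le b<1/2$) and the gain estimate $\|\eta(\cdot/\delta)h\|_{L^2}\lesssim\delta^{b'}\|h\|_{H^{b'}}$ (valid for $0\le b'<1/2$), the latter following from H\"older on the $O(\delta)$-support of $\eta(\cdot/\delta)$ together with the Sobolev embedding $H^{b'}\hookrightarrow L^{2/(1-2b')}$. The remaining range $-1/2<b<0$ is then recovered by duality.

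The main technical obstacle is the multiplication estimate in the negative range $b<0$, where the two conditions $b>-1/2$ and $b'<1/2$ must be used jointly: by duality the inequality is transferred to its adjoint, which lies in the already-proved positive-Sobolev regime precisely when both indices are strictly contained in $(-1/2,1/2)$. The strict bound $|b|,|b'|<1/2$ is essential, since at the trace threshold $|b|=1/2$ multiplication by a smooth cutoff ceases to be bounded on the relevant Sobolev space, and the argument sketched above would break down.
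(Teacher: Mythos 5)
Your overall strategy is sound and is essentially the standard one: the paper itself does not prove this lemma but reduces to $\sigma=0$ via the substitution $f\mapsto e^{\sigma|\partial_x|}f$ and then cites Section 2.6 of Tao's book, where the $\sigma=0$ case is proved by exactly the reductions you describe (conjugation by the free propagator, reduction to a one-variable Sobolev estimate, and the cutoff multiplication bound $\|\eta(\cdot/\delta)h\|_{H^b}\lesssim\delta^{b'-b}\|h\|_{H^{b'}}$). One small remark on the reduction: rather than literally passing to fiberwise \emph{restriction} norms (the infimum defining $X_{p,\delta}^{s,b}$ does not obviously commute with the fiber decomposition), you should take a single near-optimal global extension $F$ of $f$ in $X_p^{s,b'}$ and use $\eta(t/\delta)F$ as the competitor extension in the $X_p^{s,b}$ infimum; this is what your cutoff step effectively does, and then the fiberwise argument is only needed for the global multiplier estimate, where no infima appear.

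The one step that does not close as written is the final duality claim. The adjoint of multiplication by $\eta(\cdot/\delta)$ viewed as a map $H^{b'}\to H^{b}$ is the same multiplication viewed as a map $H^{-b}\to H^{-b'}$; this lands in your already-proved regime $0\le(\text{target index})\le(\text{source index})<1/2$ only when $b'\le 0$. So duality recovers the range $-1/2<b<b'\le 0$ but not the mixed case $-1/2<b<0<b'<1/2$, contrary to your assertion that it suffices to have both indices in $(-1/2,1/2)$. The mixed case needs one extra line: choose $\tilde\eta\in C_c^\infty$ with $\tilde\eta\equiv1$ on $\operatorname{supp}\eta$, write $\eta(\cdot/\delta)=\tilde\eta(\cdot/\delta)\,\eta(\cdot/\delta)$, and compose the dual estimate $\|\tilde\eta(\cdot/\delta)g\|_{H^{b}}\lesssim\delta^{-b}\|g\|_{L^2}$ (dual to your $L^2$-target bound with exponent $-b\in(0,1/2)$) with your H\"older--Sobolev bound $\|\eta(\cdot/\delta)h\|_{L^2}\lesssim\delta^{b'}\|h\|_{H^{b'}}$, yielding the full gain $\delta^{b'-b}$. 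With that patch the proof is complete and agrees with the argument the paper cites.
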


\begin{lem}\label{lem3}
Let $\sigma \ge 0$, $s\in\mathbb R$, $-1/2<b<1/2$ and $\delta>0$. Then, for any time interval $I\subset[0,\delta]$,
$$\|\chi_If\|_{X_p^{\sigma,s,b}}\le C\|f\|_{X_{p,\delta}^{\sigma,s,b}},$$
where $\chi_I(t)$ is the characteristic function of $I$, and $C>0$ is a constant depending only on $b$.
\end{lem}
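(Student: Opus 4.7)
The plan is to follow the standard template for time-localization in Bourgain spaces, reducing the claim to a one-variable multiplier bound for the characteristic function of an interval on $H^b(\mathbb R)$. First, the substitution $f\mapsto e^{\sigma|\partial_x|}f$ mentioned just before the statement lets me assume $\sigma=0$, since the spatial Fourier multiplier $e^{\sigma|\partial_x|}$ commutes with multiplication by the time-cutoff $\chi_I(t)$.

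With $\sigma=0$, I would pick an extension $g\in X_p^{s,b}(\mathbb R^{1+1})$ of $f$ satisfying $g=f$ on $(0,\delta)\times\mathbb R$ and $\|g\|_{X_p^{s,b}}\le 2\|f\|_{X_{p,\delta}^{s,b}}$. Since $I\subset[0,\delta]$, we have $\chi_I f=\chi_I g$, so it is enough to prove
$$\|\chi_I g\|_{X_p^{s,b}}\lesssim \|g\|_{X_p^{s,b}}.$$
The Fourier shift $\tau\mapsto\tau+p(\xi)$, equivalent to conjugation by the free propagator (which acts only in $x$ and hence commutes with $\chi_I(t)$), provides an isometric identification of $X_p^{s,b}$ with $H^b_tH^s_x$. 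The target thus becomes
$$\|\chi_I G\|_{H^b_tH^s_x}\le C\|G\|_{H^b_tH^s_x},$$
where $G$ is the transform of $g$, and by Plancherel in $x$ this reduces, pointwise in the spatial frequency, to the one-dimensional statement that multiplication by $\chi_I$ is bounded on $H^b(\mathbb R)$ whenever $|b|<1/2$.

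The main obstacle, and essentially the only substantive step, is precisely this multiplier bound. The case $b=0$ is trivial, and the case $-1/2<b<0$ follows from $0<b<1/2$ by duality, so I would focus on $0<b<1/2$. A natural route uses the Sobolev--Slobodeckij characterization
$$\|u\|_{H^b(\mathbb R)}^2\sim \|u\|_{L^2}^2+\iint_{\mathbb R^2}\frac{|u(t)-u(s)|^2}{|t-s|^{1+2b}}\,dt\,ds,$$
after which the seminorm for $\chi_I u$ is controlled by splitting $\mathbb R^2$ according to whether $t,s\in I$, $t,s\in I^c$, or exactly one of them lies in $I$. The off-diagonal pieces produce integrals of the form $\int r^{-1-2b}\,dr$ away from zero, which converge precisely because $2b<1$, yielding a constant that depends only on $b$, as claimed.
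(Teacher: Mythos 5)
Your reduction scheme is sound and is essentially the one the paper implicitly relies on: the paper offers no proof of this lemma, referring for $\sigma=0$ to Lemma 3.1 of \cite{CKSTT} and handling general $\sigma$ by the substitution $f\mapsto e^{\sigma|\partial_x|}f$, which commutes with multiplication by $\chi_I(t)$. Your steps of choosing a near-optimal extension $g$ with $\chi_If=\chi_Ig$, conjugating by the free propagator to identify $X_p^{s,b}$ with $H^b_tH^s_x$, and reducing by Plancherel in $x$ to the $I$-uniform boundedness of multiplication by $\chi_I$ on $H^b(\mathbb R)$ for $|b|<1/2$ are all correct, as is the duality reduction to $0<b<1/2$.

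The gap is in the one substantive estimate. For $I=[a_1,a_2]$ the mixed region $\{t\in I,\ s\notin I\}$ of the Slobodeckij seminorm of $\chi_Iu$ contributes
$$2\int_I|u(t)|^2\Big(\int_{I^c}\frac{ds}{|t-s|^{1+2b}}\Big)dt=\frac1b\int_I|u(t)|^2\big((t-a_1)^{-2b}+(a_2-t)^{-2b}\big)dt,$$
and this is not a matter of ``integrals of the form $\int r^{-1-2b}\,dr$ away from zero converging because $2b<1$'': the inner integral converges at infinity because $1+2b>1$ (i.e.\ because $b>0$), and the real issue is that it blows up like $d(t,\partial I)^{-2b}$ as $t\to\partial I$. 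Since functions in $H^b$ with $b<1/2$ need not be bounded, local integrability of this weight does not close the argument; what is required is the fractional Hardy inequality $\int_{\mathbb R}|u(t)|^2|t-a|^{-2b}\,dt\le C_b\|u\|_{H^b}^2$, valid precisely for $0<b<1/2$, applied at each endpoint. That inequality is where the hypothesis $b<1/2$ genuinely enters and where the $I$-independence of the constant comes from; as written, your proof attributes convergence to the wrong mechanism and omits this step. Once the Hardy inequality is invoked (equivalently, once one reduces to the Heaviside multiplier $\chi_{(0,\infty)}$ and quotes its $H^b$-boundedness for $|b|<1/2$), the proof is complete.
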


\section{Bilinear estimates in Gevrey-Bourgain spaces}\label{sec3}

In this section, we present bilinear estimates (Lemma \ref{bilinear}) 
in the Gevrey-Bourgain spaces.
Using these estimates, we derive Lemma \ref{f}, which plays a crucial role in establishing the almost conservation law in the next section.

\begin{lem}\label{bilinear}
Let $s>-3/4$.
Then there exist $b\in(1/2,1)$ and $\epsilon>0$ such that some of the following estimates, depending on the range of $a_2/a_1$, as shown in Table \ref{table},
hold true for all $\sigma \ge 0$ and all $b' \in [b, b + \epsilon)$:
\begin{align}
&\|(fg)_x\|_{X_u^{\sigma,s,b'-1}}\le C\|f\|_{X_u^{\sigma,s,b}}\|g\|_{X_u^{\sigma,s,b}},\label{comm0}\\
&\|(fg)_x\|_{X_u^{\sigma,s,b'-1}}\le C\|f\|_{X_v^{\sigma,s,b}}\|g\|_{X_v^{\sigma,s,b}},\label{comm1}\\
&\|(fg)_x\|_{X_v^{\sigma,s,b'-1}}\le C\|f\|_{X_u^{\sigma,s,b}}\|g\|_{X_v^{\sigma,s,b}},\label{comm2}\\
&\|fg_x\|_{X_v^{\sigma,s,b'-1}}\le C\|f\|_{X_v^{\sigma,s,b}}\|g\|_{X_u^{\sigma,s,b}},\label{comm3}\\
&\|fg_x\|_{X_v^{\sigma,s,b'-1}}\le C\|f\|_{X_u^{\sigma,s,b}}\|g\|_{X_v^{\sigma,s,b}}.\label{comm4}
\end{align}
Here, $C>0$ is a constant depending only on $s$, $b$ and $b'$.
\end{lem}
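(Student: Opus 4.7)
The plan is to reduce the Gevrey--Bourgain estimates to their classical $\sigma=0$ analogues by an exponential-weight trick, and then invoke the resonance-function analysis of the coupled KdV--KdV bilinear theory to handle the case distinctions of Table \ref{table3}. Setting $F=e^{\sigma|\partial_x|}f$ and $G=e^{\sigma|\partial_x|}g$, one has $\|F\|_{X_{p_1}^{s,b}}=\|f\|_{X_{p_1}^{\sigma,s,b}}$ and similarly for $g$, and the subadditivity $|\xi_1+\xi_2|\le|\xi_1|+|\xi_2|$ gives the pointwise bound
$$
e^{\sigma|\xi|}\bigl|\widehat{fg}(\tau,\xi)\bigr|\le \bigl(|\widehat{F}|\ast|\widehat{G}|\bigr)(\tau,\xi)
$$
on the space-time Fourier side. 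Since each $X_p^{\sigma,s,b'-1}$-norm depends only on the modulus of the Fourier transform, this reduces \eqref{comm0}--\eqref{comm4} to the same five inequalities with $\sigma=0$ applied to $F,G$ in standard Bourgain spaces.

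For the $\sigma=0$ case I would carry out the Fourier-side analysis via the resonance function
$$
R(\xi_1,\xi_2)=a\xi^3-\alpha_1\xi_1^3-\alpha_2\xi_2^3,\qquad \xi=\xi_1+\xi_2,
$$
where $(a,\alpha_1,\alpha_2)\in\{a_1,a_2\}^3$ is the triple selected by the Bourgain space on each side of the target inequality. The algebraic identity $R=(\tau-a\xi^3)-(\tau_1-\alpha_1\xi_1^3)-(\tau_2-\alpha_2\xi_2^3)$ guarantees that at least one of the three modulation weights dominates $|R|$, so after dyadic decomposition and a Cauchy--Schwarz / $L^2$ argument on the convolution the problem reduces to summing a product of multipliers controlled by a lower bound on $|R|$. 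For \eqref{comm0} one has $R=3a_1\xi_1\xi_2\xi$ identically, recovering the sharp scalar KdV bilinear estimate of Kenig--Ponce--Vega at $s>-3/4$. For \eqref{comm1}--\eqref{comm4} the ratio $a_2/a_1$ controls the zero set of $R$: when $a_2/a_1<0$ or $a_2/a_1>4$ the function $R$ has no non-trivial zeros and satisfies $|R|\gtrsim|\xi_1\xi_2\xi_{\max}|$, yielding all five estimates; when $a_2/a_1=1$ both symbols coincide, so \eqref{comm1}--\eqref{comm4} reduce to the same scalar estimate provided the divergence condition $c_{21}=c_{22}$ is imposed; when $0<a_2/a_1\ne 1\le 4$, $R$ acquires additional curves of zeros that genuinely obstruct \eqref{comm1}--\eqref{comm4}, forcing $c_{12}=c_{21}=c_{22}=0$ and leaving only \eqref{comm0} to be proved. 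The admissible band $b'\in[b,b+\epsilon)$ is then extracted from the fact that the scalar KdV bilinear estimate holds on an open neighborhood of any fixed admissible pair $(s,b)\in(-3/4,\infty)\times(1/2,1)$.

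The main obstacle will be the mixed-symbol estimates \eqref{comm1}--\eqref{comm4}, whose resonance function is an inhomogeneous cubic in $(\xi_1,\xi_2)$ rather than the conveniently factorable $3a_1\xi_1\xi_2\xi$ of the scalar case. Obtaining a usable lower bound $|R|\gtrsim|\xi_1\xi_2\xi_{\max}|$ off a small exceptional set, and identifying the exact ranges of $a_2/a_1$ for which no such exceptional set appears, is precisely the algebraic computation that produces the four rows of Table \ref{table3}. Once that resonance bookkeeping is in place and the coefficients $c_{ij}$ have been chosen so as to kill every term for which $R$ has non-trivial zeros, the remainder of the argument is the routine dyadic summation of the bilinear $X^{s,b}$ calculus, and the passage back to $\sigma>0$ is automatic via the opening reduction.
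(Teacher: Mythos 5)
Your overall strategy is sound and, in outline, reconstructs the proofs that the paper simply cites: the paper disposes of this lemma by invoking Corollary 10 of \cite{SS} for \eqref{comm0} and Theorem 3.5 of Yang--Zhang \cite{YZ} for \eqref{comm1}--\eqref{comm4} (proved there with $b'=b$ and $\sigma=0$), so your two ingredients --- the reduction to $\sigma=0$ via $e^{\sigma|\xi|}\le e^{\sigma|\xi_1|}e^{\sigma|\xi_2|}$ together with the fact that both sides of each estimate are monotone in $|\hat f|,|\hat g|$, and the resonance-function case analysis whose discriminant $3a_2(4a_1-a_2)$ produces exactly the thresholds $a_2/a_1\in\{0,4\}$ of Table \ref{table} --- are the right ones, and your identification of the mixed-symbol resonance bookkeeping as the real work is accurate. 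Be aware, though, that calling the remaining step ``routine dyadic summation'' undersells it: even the scalar case \eqref{comm0} at the threshold $s>-3/4$ is the delicate Kenig--Ponce--Vega estimate, and the mixed cases need the same machinery, which is precisely why the paper outsources them.

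There is one genuine misstatement. At $a_2/a_1=1$ you claim that \eqref{comm1}--\eqref{comm4} ``reduce to the same scalar estimate provided the divergence condition $c_{21}=c_{22}$ is imposed.'' Two things are off here. First, the coefficients $c_{ij}$ do not appear anywhere in this lemma; the divergence condition belongs to Lemma \ref{f} and Table \ref{table3}, where it is used to rewrite $c_{21}u_xv+c_{22}uv_x$ as $c_{21}(uv)_x$ so that only \eqref{comm2} is ever needed --- it is not a hypothesis under which \eqref{comm3} and \eqref{comm4} become true. Second, \eqref{comm3} and \eqref{comm4} do not reduce to the scalar KdV bilinear estimate when $a_1=a_2$: the non-divergence form $fg_x$ replaces the output derivative $|\xi_1+\xi_2|$ by $|\xi_2|$, which destroys the gain in the high-high-to-low interaction that is essential for negative $s$, and accordingly Table \ref{table} asserts only \eqref{comm0}, \eqref{comm1}, \eqref{comm2} in that regime. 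As written, your argument would claim more than the lemma does at $a_2/a_1=1$; the fix is to drop \eqref{comm3}--\eqref{comm4} from that case and leave the divergence-form recombination to Lemma \ref{f}, which is where the paper performs it.
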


The estimate \eqref{comm0} can be found in Corollary 10 of \cite{SS}. 
The remaining estimates \eqref{comm1}–\eqref{comm4} correspond respectively to 
the estimates (3.4), (3.5), (3.7), and (3.6) in \cite[Theorem 3.5]{YZ} by Yang and Zhang. In their work, \eqref{comm1}–\eqref{comm4} are proved under the assumption $b' = b$, in the context of establishing local well-posedness in Sobolev spaces. However, their argument can be readily adapted to the more general setting $b' \in [b, b + \epsilon)$. We therefore omit the details here.

Instead, we derive the following important lemma from Lemma \ref{bilinear}, in which the nonlinear functions $f_1(u,v)$ and $f_2(u,v)$ are defined as follows:
$$
f_1(u,v)=\frac{c_{11}}2\partial_x\big(e^{\sigma|\partial_x|}u^2-(e^{\sigma|\partial_x|}u)^2\big)+\frac{c_{12}}2\partial_x\big(e^{\sigma|\partial_x|}v^2-(e^{\sigma|\partial_x|}v)^2\big)
$$
and
$$
f_2(u,v)=c_{21}\big(e^{\sigma|\partial_x|}(u_x v)-(e^{\sigma|\partial_x|}u_x)(e^{\sigma|\partial_x|}v)\big)+c_{22}\big(e^{\sigma|\partial_x|}(uv_x)-(e^{\sigma|\partial_x|}u)(e^{\sigma|\partial_x|}v_x)\big),
$$
which will be introduced in Section \ref{sec4} (see \eqref{f1}, \eqref{f2}).

\begin{table}
\setlength{\tabcolsep}{2pt}
\renewcommand{\arraystretch}{1.5}
\begin{tabular}{|c|c|c|c|c|c|}
\hline
\rowcolor[HTML]{EFEFEF} 
\multicolumn{1}{|c|}{\cellcolor[HTML]{EFEFEF}} &
  \multicolumn{1}{c|}{\cellcolor[HTML]{EFEFEF}$\frac{a_2}{a_1}<0$} &
   $0< \frac{a_2}{a_1}\neq1\leq 4$ &
 $\frac{a_2}{a_1}=1$&
  $\frac{a_2}{a_1}>4$ \\ \hline
$s>-\frac34$ &
  \begin{tabular}[c]{@{}l@{}}\eqref{comm0}, \eqref{comm1}, \eqref{comm2}\\ \eqref{comm3}, \eqref{comm4}\end{tabular} &
  \eqref{comm0} &
  \eqref{comm0}, \eqref{comm1}, \eqref{comm2} &
  \begin{tabular}[c]{@{}l@{}}\eqref{comm0}, \eqref{comm1}, \eqref{comm2}, \\\eqref{comm3}, \eqref{comm4} 
  \end{tabular}\\ \hline
\end{tabular}
\vspace{0.5cm}
\caption{The bilinear estimates vary depending on the range of $a_2/a_1$.}\label{table}
\end{table}
%%%%%%%%%%%%%%%%%%%%%%%%%%%%%%%%%%%%%%%%%%%%%%%%

\begin{lem}\label{f}
Given $\rho\in[0,3/4)$,
there exist $b\in(1/2,1)$ and $C_{\rho,b}>0$ such that
\begin{equation}\label{festimate}
\|f_1(u,v)\|_{X_u^{0,b-1}}+\|f_2(u,v)\|_{X_v^{0,b-1}}\le C_{\rho,b}\sigma^\rho\|(u,v)\|^2_{\mathcal X_{(u,v)}^{\sigma,0,b}}
\end{equation}
for all $\sigma\ge0$, and the coefficients $c_{ij}$ $(i,j=1,2)$ vary depending on the range of $a_2/a_1$ as in Table \ref{table3}.
\end{lem}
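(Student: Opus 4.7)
The strategy is to reduce the Gevrey commutator estimates to the bilinear estimates of Lemma \ref{bilinear} taken at a suitably negative Sobolev regularity. Each of the four pieces that compose $f_1(u,v)$ and $f_2(u,v)$ has a Fourier symbol of the form
$$\bigl[e^{\sigma|\xi|} - e^{\sigma|\xi_1|}e^{\sigma|\xi_2|}\bigr] \cdot (i\xi \text{ or } i\xi_j),\qquad \xi=\xi_1+\xi_2,$$
where the derivative factor sits outside the convolution in the $f_1$ pieces and on one specific factor in the $f_2$ pieces. Writing the bracket as $e^{\sigma|\xi_1|+\sigma|\xi_2|}\bigl(1-e^{-\sigma(|\xi_1|+|\xi_2|-|\xi|)}\bigr)$ and using $1-e^{-y}\le y^\rho$ for $y\ge0$ together with $|\xi_1|+|\xi_2|-|\xi|\le 2\min(|\xi_1|,|\xi_2|)$, I first derive the symbol bound
$$\bigl|e^{\sigma|\xi|}-e^{\sigma|\xi_1|}e^{\sigma|\xi_2|}\bigr|\le 2^{\rho}\sigma^{\rho}\min(|\xi_1|,|\xi_2|)^{\rho}e^{\sigma|\xi_1|}e^{\sigma|\xi_2|}.$$
Setting $U=e^{\sigma|\partial_x|}u$ and $V=e^{\sigma|\partial_x|}v$, the two exponentials on the right are absorbed into $\hat U$ and $\hat V$, producing a pointwise Fourier estimate on each commutator piece.

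The key algebraic observation that couples this to the bilinear machinery is
$$\min(|\xi_1|,|\xi_2|)^{\rho}\le 2^{\rho}\,\langle\xi_1\rangle^{\rho}\langle\xi_2\rangle^{\rho}\langle\xi\rangle^{-\rho},$$
which follows at once from $\langle\xi\rangle\le\langle\xi_1\rangle+\langle\xi_2\rangle\le 2\max(\langle\xi_1\rangle,\langle\xi_2\rangle)$. The factor $\langle\xi\rangle^{-\rho}$ converts the target $X^{0,0,b-1}$-norm on the left into an $X^{0,-\rho,b-1}$-norm, while the two factors $\langle\xi_j\rangle^{\rho}$ are absorbed on the right by replacing $U$ with $\langle D\rangle^{\rho}\widetilde U$, where $\widetilde U$ is defined by $\widehat{\widetilde U}=|\hat U|$ (Bourgain norms being insensitive to phase). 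By construction,
$$\|\langle D\rangle^{\rho}\widetilde U\|_{X_u^{0,-\rho,b}}=\|U\|_{X_u^{0,0,b}}=\|u\|_{X_u^{\sigma,0,b}},$$
and likewise for $V$.

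After these reductions each piece of $f_1,f_2$ is dominated in norm by $C_\rho\sigma^{\rho}$ times the left-hand side of one of \eqref{comm0}--\eqref{comm4}, applied to the auxiliary functions $\langle D\rangle^{\rho}\widetilde U$, $\langle D\rangle^{\rho}\widetilde V$ at Sobolev index $s=-\rho$. The hypothesis $\rho\in[0,3/4)$ gives $-\rho>-3/4$, placing us exactly in the range where Lemma \ref{bilinear} applies and delivering the claimed bound with $\sigma^{\rho}$ gain. The case analysis in Table \ref{table3} is compatible with this scheme: in the divergence case $c_{21}=c_{22}$ (i.e.\ $a_2/a_1=1$) the two $f_2$ pieces combine into $c_{21}\partial_x[e^{\sigma|\partial_x|}(uv)-UV]$, which is handled by \eqref{comm2} rather than \eqref{comm3}--\eqref{comm4}; in the arbitrary-coefficient cases all five estimates are available from Table \ref{table}, and in the remaining case only \eqref{comm0} is required. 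The main obstacle is exactly the matching between the commutator gain $\min(|\xi_1|,|\xi_2|)^{\rho}$ and the endpoint Sobolev level $-\rho$ in Lemma \ref{bilinear}: the restriction $\rho<3/4$ is precisely what keeps us inside the range of validity, and any larger $\rho$ would fall outside it.
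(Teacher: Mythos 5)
Your proposal is correct and follows essentially the same route as the paper's own proof: the same commutator symbol bound giving the $\sigma^\rho\min(|\xi_1|,|\xi_2|)^\rho$ gain, the same conversion via $\min(|\xi_1|,|\xi_2|)^\rho\lesssim\langle\xi_1\rangle^\rho\langle\xi_2\rangle^\rho\langle\xi\rangle^{-\rho}$, the same reduction to Lemma \ref{bilinear} at $\sigma=0$, $s=-\rho>-3/4$ applied to the phase-stripped auxiliary functions, and the same case analysis including the divergence-form combination when $a_2/a_1=1$.
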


\begin{proof}
Note first that the bilinear forms $f_1$ and $f_2$ in terms of $(u, v)$ are constructed as follows: The first term in $f_1$ represents a bilinear form involving $(u, u)$, while the second term involves a bilinear form with $(v, v)$. The first term in $f_2$ encompasses a bilinear form with $(u_x, v)$, while the second term involves a bilinear form with $(u, v_x)$. 
In view of this observation, 
we will employ the bilinear estimates in Lemma 
\ref{bilinear} to estimate the bilinear forms. 

The crucial observation in this process is that, as evident from Table \ref{table}, the applicable estimates vary depending on the range of $a_2/a_1$, consequently leading to conditions on the coefficients $c_{ij}$ ($1\le i,j\le2$) as depicted in Table \ref{table3}.

When $a_2/a_1<0$ or $a_2/a_1>4$, all the bilinear estimates \eqref{comm0}-\eqref{comm4} become applicable. Hence, we can control all the bilinear terms in $f_1$ and $f_2$, and there is no need to impose constraints on the coefficients.

When $0<\frac{a_2}{a_1}\neq 1\leq 4$, 
we can utilize only \eqref{comm0}, allowing us to estimate solely the first term in $f_1$.
Consequently, we are unable to control all the bilinear terms in $f_1$ and $f_2$ with respect to $c_{12}$, $c_{21}$, and $c_{22}$, resulting in the relevant constants being zero as in Table \ref{table3}.

In the last case where $a_2/a_1=1$, the applicable estimates (as shown in Table \ref{table}) encompass \eqref{comm0}, \eqref{comm1}, and \eqref{comm2}.
The first two estimates control all the bilinear terms in $f_1$, but
the last estimate is only valuable when dealing with bilinear terms resembling a divergence form, such as $(uv)_x$. Note that the two terms in $f_2$ combine to form a divergence form,
\begin{equation}\label{foot}
c_{21}\partial_x\big(e^{\sigma|\partial_x|}(uv)-(e^{\sigma|\partial_x|}u)(e^{\sigma|\partial_x|}v)\big),
\end{equation}
provided $c_{21}=c_{22}$ as in Table \ref{table3}.
This condition aligns with the system \eqref{ssys} being in a divergence form.

Now we will show explicitly how to employ the bilinear estimates to achieve the desired estimation of $f_1$ and $f_2$, as in \eqref{festimate}.
Note first that
\begin{align*}
\widehat{g_1g_2}(\tau,\xi)=(\widehat{g_1}*\widehat{g_2})(\tau,\xi)
&=\int_{\mathbb R^{2}}\widehat{g_1}(\tau_1,\xi_1)\widehat{g_2}(\tau-\tau_1,\xi-\xi_1)d\tau_1d\xi_1\\
&=\int_{\mathbb R^{2}}\widehat{g_1}(\tau_1,\xi_1)\widehat{g_2}(\tau_2,\xi_2)d\tau_1d\xi_1
\end{align*}
with $\tau_2=\tau-\tau_1$ and $\xi_2=\xi-\xi_1$.
By the definition of the $X_u^{s,b}$ norm, we then see
\begin{align*}
\|f_1(u,v)\|_{X_u^{0,b-1}}
&\lesssim|c_{11}|\Big\|\frac{\xi}{\langle\tau-a_1\xi^3\rangle^{1-b}}\int_{\mathbb R^{2}}(e^{\sigma|\xi_1|}e^{\sigma|\xi_2|}-e^{\sigma|\xi|})\widehat u(\tau_1,\xi_1)\widehat u(\tau_2,\xi_2)d\tau_1d\xi_1\Big\|_{L^2_{\tau,\xi}}\\
&+|c_{12}|\Big\|\frac{\xi}{\langle\tau-a_1\xi^3\rangle^{1-b}}\int_{\mathbb R^{2}}(e^{\sigma|\xi_1|}e^{\sigma|\xi_2|}-e^{\sigma|\xi|})\widehat v(\tau_1,\xi_1)\widehat v(\tau_2,\xi_2)d\tau_1d\xi_1\Big\|_{L^2_{\tau,\xi}},
\end{align*}
and
\begin{align*}
\|f_2(u,v)\|_{X_v^{0,b-1}}
&\lesssim|c_{21}|\Big\|\frac{1}{\langle\tau-a_2\xi^3\rangle^{1-b}}\int_{\mathbb R^{2}}(e^{\sigma|\xi_1|}e^{\sigma|\xi_2|}-e^{\sigma|\xi|})\xi_1\widehat u(\tau_1,\xi_1)\widehat v(\tau_2,\xi_2)d\tau_1d\xi_1\Big\|_{L^2_{\tau,\xi}}\\
\label{bif}
&+|c_{22}|\Big\|\frac1{\langle\tau-a_2\xi^3\rangle^{1-b}}\int_{\mathbb R^{2}}(e^{\sigma|\xi_1|}e^{\sigma|\xi_2|}-e^{\sigma|\xi|})\widehat u(\tau_1,\xi_1)\xi_2\widehat v(\tau_2,\xi_2)d\tau_1d\xi_1\Big\|_{L^2_{\tau,\xi}}.
\end{align*}
In particular, when the two terms in $f_2$ can combine to take on a divergence form such as in the case where $a_2/a_1=1$, as explained in \eqref{foot}, the corresponding bound is replaced by a single term:
\begin{equation}\label{divcase}
    |c_{21}|\Big\|\frac{\xi}{\langle\tau-a_1\xi^3\rangle^{1-b}}\int_{\mathbb R^{2}}(e^{\sigma|\xi_1|}e^{\sigma|\xi_2|}-e^{\sigma|\xi|})\widehat u(\tau_1,\xi_1)\widehat v(\tau_2,\xi_2)d\tau_1d\xi_1\Big\|_{L^2_{\tau,\xi}}.
    \end{equation}

Next, we use the following inequality\footnote{This follows by combining
$e^{\sigma|\xi_1|}e^{\sigma|\xi_2|}-e^{\sigma|\xi_1+\xi_2|}\le\big(2\sigma \min\{|\xi_1|,|\xi_2|\}\big)^\rho e^{\sigma|\xi_1|}e^{\sigma|\xi_2|}$,
which can be found in \cite[Lemma 12]{SS}, and 
$$\min\{|\xi_1|,|\xi_2|\}\le\frac{2(1+\max\{|\xi_1|,|\xi_2|\})(1+\min\{|\xi_1|,|\xi_2|\})}{2+2\max\{|\xi_1|,|\xi_2|\}}
\leq \frac{2\langle\xi_1\rangle\langle\xi_2\rangle}{1+|\xi_1|+|\xi_2|}\leq \frac{2\langle\xi_1\rangle\langle\xi_2\rangle}{\langle\xi_1+\xi_2\rangle}.$$}
\begin{equation*}
e^{\sigma|\xi_1|}e^{\sigma|\xi_2|}-e^{\sigma|\xi_1+\xi_2|}\lesssim_\rho \frac{\sigma^{\rho}\langle\xi_1\rangle^\rho\langle\xi_2\rangle^\rho}{\langle\xi_1+\xi_2\rangle^\rho}e^{\sigma|\xi_1|}e^{\sigma|\xi_2|},
\end{equation*}
where $\sigma\ge0$, $0\le\rho\le 1$ and $\xi_1,\xi_2 \in \mathbb{R}$,
to arrive at
\begin{equation}\label{bif3}
\begin{aligned}
\|f_1(u,v)\|_{X_u^{0,b-1}}
\lesssim_{\rho}&|c_{11}|\sigma^{\rho}\Big\|\partial_x\big(e^{\sigma|\partial_x|}\langle\partial_x\rangle^\rho |\widehat u|^{\vee}\cdot
e^{\sigma|\partial_x|}\langle\partial_x\rangle^\rho |\widehat u|^{\vee}\big)\Big\|_{X_u^{0,-\rho,b-1}}\\
&+|c_{12}|\sigma^\rho\Big\|\partial_x\big(e^{\sigma|\partial_x|}\langle\partial_x\rangle^\rho |\widehat v|^{\vee}\cdot
e^{\sigma|\partial_x|}\langle\partial_x\rangle^\rho |\widehat v|^{\vee}\big)\Big\|_{X_u^{0,-\rho,b-1}},
\end{aligned}
\end{equation}
and
\begin{equation}\label{bif32}
\begin{aligned}
\|f_2(u,v)\|_{X_v^{0,b-1}}
\lesssim_{\rho}&|c_{11}|\sigma^\rho\Big\|e^{\sigma|\partial_x|}\langle\partial_x\rangle^\rho \partial_x w_1 \cdot
e^{\sigma|\partial_x|}\langle\partial_x\rangle^\rho |\widehat v|^{\vee}\Big\|_{X_v^{0,-\rho,b-1}}\\
&+|c_{12}|\sigma^\rho\Big\|e^{\sigma|\partial_x|}\langle\partial_x\rangle^\rho |\widehat u|^{\vee}\cdot
e^{\sigma|\partial_x|}\langle\partial_x\rangle^\rho \partial_x w_2\Big\|_{X_v^{0,-\rho,b-1}}
\end{aligned}
\end{equation}
with $w_1,w_2$ such that $\partial_x w_1=|\widehat{u_x}|^{\vee}$ and $\partial_x w_2 =|\widehat{v_x}|^{\vee}$.
The right-hand side of \eqref{bif32} is replaced by a single term,
$$|c_{21}|\sigma^\rho\Big\|\partial_x\big(e^{\sigma|\partial_x|}\langle\partial_x\rangle^\rho |\widehat u|^{\vee}\cdot
e^{\sigma|\partial_x|}\langle\partial_x\rangle^\rho |\widehat v|^{\vee}\big)\Big\|_{X_v^{0,-\rho,b-1}},
$$
corresponding to \eqref{divcase}.

We finally apply the bilinear estimates in Lemma \ref{bilinear}, with $\sigma=0$, $s=-\rho\in(-\frac34,0]$ and $b'=b\in(1/2,1)$, to \eqref{bif3} and \eqref{bif32} in each range of $a_2/a_1$ as explained at the beginning of the proof. 
As a result, for a given $\rho\in[0,3/4)$, there exists $b\in(1/2,1)$ such that
\begin{align*}
\|f_1(u,v)\|_{X_u^{0,b-1}}+\|f_2(u,v)\|_{X_v^{0,b-1}} &\lesssim_{\rho,b}\sigma^{\rho}\big\|e^{\sigma|\partial_x|}\langle\partial_x\rangle^{\rho}u\big\|_{X_u^{0,-\rho,b}}^2+\sigma^{\rho}\big\|e^{\sigma|\partial_x|}\langle\partial_x\rangle^{\rho}v\big\|_{X_v^{0,-\rho,b}}^2\\
&\qquad\qquad+\sigma^\rho\big\|e^{\sigma|\partial_x|}\langle\partial_x\rangle^{\rho}u\big\|_{X_u^{0,-\rho,b}}\big\|e^{\sigma|\partial_x|}\langle\partial_x\rangle^{\rho}v\big\|_{X_v^{0,-\rho,b}}\\
&\lesssim_{\rho,b} \sigma^\rho\|(u,v)\|_{\mathcal X_{(u,v)}^{\sigma,0,b}}^2
\end{align*}
for all $\sigma\ge0$, as desired.
\end{proof}

\section{Local well-posedness and almost conservation law}\label{sec4}
In this section we shall establish the local well-posedness in Subsection \ref{subsec4.1} and the almost conservation law in Subsection \ref{subsec4.2}.

\subsection{Local well-posedness}\label{subsec4.1}
We establish the following local well-posedness result in the space $\mathcal G^{\sigma,s}$ using Picard iteration in the $X_{p,\delta}^{\sigma,s,b}$ framework, together with Lemma \ref{lem00}. The solution exists on a time interval $0\le t\le\delta$ for some $\delta>0$ depending on the size of the initial data. In other words, we show that the radius of analyticity remains strictly positive throughout this short time interval.

\begin{thm}\label{thm2}
Let $\sigma>0$ and $s>-3/4$. Then for any initial data $(u_0,v_0)\in\mathcal G^{\sigma,s}$, there exist a time $\delta>0$ and a unique solution $(u,v)\in C([0,\delta],\mathcal G^{\sigma,s})$ to the Cauchy problem \eqref{ssys}, where the coefficients $c_{ij}$ $(i,j=1,2)$ are chosen based on the ratio $a_2/a_1$ as specified in Table \ref{table3}. 
The existence time $\delta$ can be taken as
\begin{equation}\label{deldel}
\delta = c_0(1+\|u_0\|_{G^{\sigma,s}}+\|v_0\|_{G^{\sigma,s}})^{-a}
\end{equation}
for some constants $c_0>0$ and $a>2$ depending only on $s$.
Furthermore, for any $1/2<b<1$, the solution $(u,v)$ satisfies
\begin{equation}\label{lowe}
\|u\|_{X_{u,\delta}^{\sigma,s,b}} \le C(\|u_0\|_{G^{\sigma,s}} + \|v_0\|_{G^{\sigma,s}}),\quad
\|v\|_{X_{v,\delta}^{\sigma,s,b}} \le C(\|u_0\|_{G^{\sigma,s}} + \|v_0\|_{G^{\sigma,s}})
\end{equation}
for some constant $C>0$ depending only on $b$.
\end{thm}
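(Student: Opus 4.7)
The plan is to reformulate \eqref{ssys} via Duhamel's formula as a fixed-point problem and solve it by Picard iteration in the Gevrey--Bourgain space. Writing $S_j(t)=e^{-a_jt\partial_x^3}$ for $j=1,2$, set
\begin{align*}
\Phi_1(u,v)(t) &= S_1(t)u_0 + \int_0^t S_1(t-t')\bigl(c_{11}uu_x + c_{12}vv_x\bigr)(t')\,dt',\\
\Phi_2(u,v)(t) &= S_2(t)v_0 + \int_0^t S_2(t-t')\bigl(c_{21}u_xv + c_{22}uv_x\bigr)(t')\,dt'.
\end{align*}
I would seek a fixed point of $\Phi=(\Phi_1,\Phi_2)$ in the closed ball $B_R\subset\mathcal X_{(u,v),\delta}^{\sigma,s,b}$ of radius $R$ comparable to $\|u_0\|_{G^{\sigma,s}}+\|v_0\|_{G^{\sigma,s}}$, with $b\in(1/2,1)$ supplied by Lemma \ref{bilinear}.

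The argument rests on the standard linear estimates in the Bourgain space, transferred to the Gevrey--Bourgain setting via the substitution $f\mapsto e^{\sigma|\partial_x|}f$: for $1/2<b<1$,
\begin{equation*}
\|S_j(t)f\|_{X_{p_j,\delta}^{\sigma,s,b}} \lesssim \|f\|_{G^{\sigma,s}},\qquad \Bigl\|\int_0^t S_j(t-t')F(t')\,dt'\Bigr\|_{X_{p_j,\delta}^{\sigma,s,b}} \lesssim \|F\|_{X_{p_j,\delta}^{\sigma,s,b-1}},
\end{equation*}
where $p_j(\xi)=a_j\xi^3$. To gain the small power of $\delta$ needed for contraction, I would pick $b'\in(b,b+\epsilon)$ with $\epsilon$ as in Lemma \ref{bilinear}; since $-1/2<b-1<b'-1<1/2$, Lemma \ref{lem2} yields
\begin{equation*}
\|F\|_{X_{p_j,\delta}^{\sigma,s,b-1}} \le C\delta^{b'-b}\|F\|_{X_{p_j,\delta}^{\sigma,s,b'-1}}.
\end{equation*}
The bilinear estimates of Lemma \ref{bilinear}---invoking only those entries applicable for the given range of $a_2/a_1$, as listed in Table \ref{table}---then bound each Duhamel nonlinearity by a constant times $R^2$, yielding
\begin{equation*}
\|\Phi(u,v)\|_{\mathcal X_{(u,v),\delta}^{\sigma,s,b}} \le C\bigl(\|u_0\|_{G^{\sigma,s}}+\|v_0\|_{G^{\sigma,s}}\bigr) + C\delta^{b'-b}R^2,
\end{equation*}
together with an analogous Lipschitz bound of the form $C\delta^{b'-b}R\,\|(u_1,v_1)-(u_2,v_2)\|_{\mathcal X_{(u,v),\delta}^{\sigma,s,b}}$ for the difference.

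Choosing $R=2C(\|u_0\|_{G^{\sigma,s}}+\|v_0\|_{G^{\sigma,s}})$ and $\delta$ small enough that $2C^2\delta^{b'-b}R\le 1/2$ produces a unique fixed point in $B_R$, with the lifespan
\begin{equation*}
\delta \sim \bigl(1+\|u_0\|_{G^{\sigma,s}}+\|v_0\|_{G^{\sigma,s}}\bigr)^{-1/(b'-b)},
\end{equation*}
verifying \eqref{deldel} with $a=1/(b'-b)>2$, since $b'-b$ may be chosen less than $1/2$. The norm bound \eqref{lowe} is built into the fixed-point ball, and continuity of the solution in $\mathcal G^{\sigma,s}$ follows from Lemma \ref{lem00}. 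The main technical point I expect is ensuring that the selection of bilinear estimates invoked at each step matches the coefficient restrictions dictated by Table \ref{table3}: whenever the value of $a_2/a_1$ forbids one of \eqref{comm1}--\eqref{comm4}, the corresponding coefficient $c_{ij}$ vanishes in Table \ref{table3}, so the offending nonlinear term is absent, exactly as in the case-by-case bookkeeping carried out in the proof of Lemma \ref{f}.
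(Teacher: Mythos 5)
Your proposal is correct and follows essentially the same route as the paper's own proof: Duhamel reformulation, the linear estimates of Lemma \ref{lem1}, the $\delta^{b'-b}$ gain from Lemma \ref{lem2}, the bilinear estimates of Lemma \ref{bilinear} selected according to Table \ref{table3}, and a contraction in a ball of radius comparable to the data norm, yielding the lifespan $\delta\sim(1+\|u_0\|_{G^{\sigma,s}}+\|v_0\|_{G^{\sigma,s}})^{-1/(b'-b)}$ with $a=1/(b'-b)>2$. The only cosmetic difference is that the paper writes out the Picard iterates explicitly and shows the sequence is Cauchy rather than invoking the Banach fixed-point theorem on a closed ball.
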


\begin{proof}
By Lemma \ref{lem00}, we work in the space $X^{\sigma,s,b}_\delta$ with $\sigma\ge0$, $s\in\mathbb{R}$, and $b>1/2$ instead of directly using $G^{\sigma,s}$. We begin by considering the linearized version of the coupled KdV–KdV system:
$$\begin{cases}
u_t + a_1 u_{xxx} = F_1(t,x), \\
v_t + a_2 v_{xxx} = F_2(t,x), \\
u(0,x) = u_0(x),\quad v(0,x) = v_0(x).
\end{cases}$$
Using Duhamel's principle, the solution is given by
\begin{equation}\label{DF}
\begin{cases}
u(t,x) = e^{-ta_1\partial_x^3}u_0(x) + \int_0^t e^{-(t-t')a_1\partial_x^3}F_1(t',\cdot)\,dt', \\
v(t,x) = e^{-ta_2\partial_x^3}v_0(x) + \int_0^t e^{-(t-t')a_2\partial_x^3}F_2(t',\cdot)\,dt',
\end{cases}
\end{equation}
where the Fourier multiplier $e^{itp(-i\partial_x)}$ is defined as
$$e^{itp(-i\partial_x)}f(x) = \frac{1}{2\pi} \int_{\mathbb{R}} e^{ix\xi} e^{itp(\xi)} \hat{f}(\xi)\,d\xi.$$

We now state the standard energy estimates in the $X_{p,\delta}^{\sigma,s,b}$ space (see Proposition 2.12 in \cite{T1}):

\begin{lem}\label{lem1}
Let $\sigma\ge0$, $s\in\mathbb{R}$, $1/2<b\le1$, and $0<\delta\le1$. Then the following estimates hold:
$$\|e^{-ta_1\partial_x^3}u_0\|_{X_{u,\delta}^{\sigma,s,b}} \le C\|u_0\|_{G^{\sigma,s}},\quad
\left\|\int_0^t e^{-(t-t')a_1\partial_x^3}F_1(t',\cdot)\,dt'\right\|_{X_{u,\delta}^{\sigma,s,b}} \le C\|F_1\|_{X_{u,\delta}^{\sigma,s,b-1}},$$
$$
\|e^{-ta_2\partial_x^3}v_0\|_{X_{v,\delta}^{\sigma,s,b}} \le C\|v_0\|_{G^{\sigma,s}},
\quad
\left\|\int_0^t e^{-(t-t')a_2\partial_x^3}F_2(t',\cdot)\,dt'\right\|_{X_{v,\delta}^{\sigma,s,b}} \le C\|F_2\|_{X_{v,\delta}^{\sigma,s,b-1}},$$
for some constant $C>0$ depending only on $b$.
\end{lem}

We now define the sequence $\{(u^{(n)}, v^{(n)})\}_{n=0}^\infty$ as follows:
\[
\begin{cases}
u^{(0)}_t + a_1 u^{(0)}_{xxx} = 0, \\
v^{(0)}_t + a_2 v^{(0)}_{xxx} = 0, \\
u^{(0)}(0) = u_0(x),\quad v^{(0)}(0) = v_0(x),
\end{cases}
\]
and for each $n \in \mathbb{Z}^+$,
\[
\begin{cases}
u^{(n)}_t + a_1 u^{(n)}_{xxx} = c_{11} u^{(n-1)} u^{(n-1)}_x + c_{12} v^{(n-1)} v^{(n-1)}_x, \\
v^{(n)}_t + a_2 v^{(n)}_{xxx} = c_{21} u^{(n-1)}_x v^{(n-1)} + c_{22} u^{(n-1)} v^{(n-1)}_x, \\
u^{(n)}(0,x) = u_0(x),\quad v^{(n)}(0,x) = v_0(x).
\end{cases}
\]

By applying the Duhamel formula \eqref{DF}, we can express each term as
\[
\begin{cases}
u^{(0)}(t,x) = e^{-t a_1 \partial_x^3} u_0(x), \\
v^{(0)}(t,x) = e^{-t a_2 \partial_x^3} v_0(x),
\end{cases}
\]
and for $n \ge 1$,
$$\begin{cases}
u^{(n)}(t,x)=e^{-ta_1\partial_x^3}u_0(x)
-\int_0^t e^{-(t-t')a_1\partial_x^3}\big[-c_{11}u^{(n-1)}u_x^{(n-1)}-c_{12}v^{(n-1)}v_x^{(n-1)}\big](t',\cdot)dt',\\
v^{(n)}(t,x)=e^{-ta_2\partial_x^3}v_0(x)-\int_0^t e^{-(t-t')a_2\partial_x^3}
\big[-c_{21}u_x^{(n-1)}v^{(n-1)}-c_{22}u^{(n-1)}v_x^{(n-1)}\big](t',\cdot)dt'.
\end{cases}$$

By Lemma \ref{lem1} we have
\begin{equation}\label{0step}
\|u^{(0)}\|_{X_{u,\delta}^{\sigma,s,b}}\lesssim_{b}\|u_0\|_{G^{\sigma,s}},\quad \|v^{(0)}\|_{X_{v,\delta}^{\sigma,s,b}}\lesssim_{b}\|v_0\|_{G^{\sigma,s}},
\end{equation}
and 
$$
\|u^{(n)}-u^{(0)}\|_{X_{u,\delta}^{\sigma,s,b}}\lesssim_{b}\big\|-c_{11}u^{(n-1)}u_x^{(n-1)}
-c_{12}v^{(n-1)}v_x^{(n-1)}\big\|_{X_{u,\delta}^{\sigma,s,b-1}}
$$
if $\sigma\ge0,$ $s\in\mathbb R,$ $1/2<b\le1,$  and $0<\delta\leq 1$.
By applying Lemma \ref{lem2} (with $b$ and $b'$ replaced by $b-1$ and $b'-1$ respectively) and then Lemma \ref{bilinear}, the above is further bounded by
\begin{align*}
&\lesssim_{b,b'}\delta^{b'-b}\Big\| -c_{11}u^{(n-1)}u_x^{(n-1)}-c_{12}v^{(n-1)}v_x^{(n-1)}
\Big\|_{X_{u,\delta}^{\sigma,s,b'-1}}\\
&\lesssim_{s,b,b'}\delta^{b'-b}(\|u^{(n-1)} \|_{X_{u,\delta}^{\sigma,s,b}}^2
+\|v^{(n-1)}\|_{X_{v,\delta}^{\sigma,s,b}}^2),
\end{align*}
if $\sigma\ge0,$ $s>-3/4,$ $1/2<b<b^\prime<1$, and $0<\delta\leq1$. 
This yields
\begin{equation*}
\|u^{(n)}\|_{X_{u,\delta}^{\sigma,s,b}}
\lesssim_{s,b,b'}\|u_0\|_{G^{\sigma,s}}
+\delta^{b'-b}\big(\|u^{(n-1)}\|_{X_{u,\delta}^{\sigma,s,b}}^2+\|v^{(n-1)}\|_{X_{v,\delta}^{\sigma,s,b}}^2
\big),
\end{equation*}
and similarly,
\begin{equation*}
\|v^{(n)}\|_{X_{v,\delta}^{\sigma,s,b}}\lesssim_{s,b,b'}\|v_0\|_{G^{\sigma,s}}
+\delta^{b'-b}
\|u^{(n-1)} \|_{X_{u,\delta}^{\sigma,s,b}}\|v^{(n-1)} \|_{X_{v,\delta}^{\sigma,s,b}}, 
\end{equation*}
if $\sigma\ge0,$ $s>-3/4,$ $1/2<b<b^\prime<1$, and $0<\delta\leq1$.  

By induction and the initial bound \eqref{0step}, it follows that for all $n \ge 0$,
\begin{align}
\label{proof1}
\|u^{(n)}\|_{X_{u,\delta}^{\sigma,s,b}} &\lesssim_b \|u_0\|_{G^{\sigma,s}} + \|v_0\|_{G^{\sigma,s}}, \\
\label{proof2}
\|v^{(n)}\|_{X_{v,\delta}^{\sigma,s,b}} &\lesssim_b \|u_0\|_{G^{\sigma,s}} + \|v_0\|_{G^{\sigma,s}},
\end{align}
provided that
\begin{equation}\label{delttt}
\delta^{b'-b} (\|u_0\|_{G^{\sigma,s}} + \|v_0\|_{G^{\sigma,s}}) \ll 1.
\end{equation}

We now observe that
$$
u^{(n)}-u^{(n-1)}=-c_{11}(u^{(n-1)}u_x^{(n-1)}-u^{(n-2)}u_x^{(n-2)})-c_{12}(v^{(n-1)}v_x^{(n-1)}-v^{(n-2)}v_x^{(n-2)}),
$$
and recall the identity $ab - cd = (a - c)b + c(b - d)$.  
Using this decomposition, and applying Lemmas \ref{lem1}, \ref{lem2}, and \ref{bilinear} as before, we obtain 
\begin{align*}
\|u^{(n)}-u^{(n-1)}\|_{X_{u,\delta}^{\sigma,s,b}}\lesssim_{s,b,b'}
\delta^{b'-b}
\Big(&\big\|u^{(n-1)}+u^{(n-2)}\big\|_{X_{u,\delta}^{\sigma,s,b}}\big\|u^{(n-1)}-u^{(n-2)}\big\|_{X_{u,\delta}^{\sigma,s,b}}\\
&+\big\|v^{(n-1)}+v^{(n-2)}\big\|_{X_{v,\delta}^{\sigma,s,b}}\big\|v^{(n-1)}-v^{(n-2)}\big\|_{X_{v,\delta}^{\sigma,s,b}}\Big),
\end{align*}
provided that $\sigma \ge 0$, $s > -3/4$, $1/2 < b < b' < 1$, and $0 < \delta \le 1$.

Next, by applying the uniform bounds \eqref{proof1} and \eqref{proof2}, followed by the smallness condition \eqref{delttt}, we further estimate the above by
\begin{align*}
&\delta^{b'-b}(\|u_0\|_{G^{s,b}}+\|v_0\|_{G^{s,b}})\big(\big\|u^{(n-1)}-u^{(n-2)}\big\|_{X_{u,\delta}^{\sigma,s,b}}+\|v^{(n-1)}-v^{(n-2)}\big\|_{X_{v,\delta}^{\sigma,s,b}}\big)\\
&\qquad\qquad\qquad\le\frac14\big(\big\|u^{(n-1)}-u^{(n-2)}\Big\|_{X_{u,\delta}^{\sigma,s,b}}+\|v^{(n-1)}-v^{(n-2)}\big\|_{X_{v,\delta}^{\sigma,s,b}}\big),
\end{align*}
and hence
$$
\|u^{(n)}-u^{(n-1)}\|_{X_{u,\delta}^{\sigma,s,b}}
\le\frac14\big(\big\|u^{(n-1)}-u^{(n-2)}\Big\|_{X_{u,\delta}^{\sigma,s,b}}+\|v^{(n-1)}-v^{(n-2)}\big\|_{X_{v,\delta}^{\sigma,s,b}}\big).
$$
In the same manner, we obtain
$$\|v^{(n)}-v^{(n-1)}\|_{X_{v,\delta}^{\sigma,s,b}}\le\frac14\big(\big\|u^{(n-1)}-u^{(n-2)}\big\|_{X_{u,\delta}^{\sigma,s,b}}+\|v^{(n-1)}-v^{(n-2)}\big\|_{X_{v,\delta}^{\sigma,s,b}}\big).
$$
These estimates imply that the sequence $\{(u^{(n)}, v^{(n)})\}_{n=0}^\infty$ is Cauchy in the space $X_{u,\delta}^{\sigma,s,b} \times X_{v,\delta}^{\sigma,s,b}$, and hence converges to a solution $(u,v)$ with the desired bounds \eqref{proof1} and \eqref{proof2}, as asserted in \eqref{lowe}.
Finally, the lifespan estimate \eqref{deldel} follows directly from \eqref{delttt} and $0 < b' - b < 1/2$.
\end{proof}

\subsection{Almost conservation law}\label{subsec4.2}
Having established the existence of local-in-time solutions, our next goal is to extend this result to arbitrarily large time intervals. To do so, it is essential to control the growth of the norm  on which the local existence time depends.
In order to apply the local well-posedness result iteratively over successive short time intervals and thereby reach any prescribed final time $T > 0$, we employ an approximate conservation principle. This allows us to propagate the solution forward in time while maintaining control over the norm, by
adjusting the analyticity strip width parameter $\sigma$ according to the size of $T$.

\begin{thm}\label{thm3}
Let $0\le\rho\le1$, $\frac12<b<1$ and $\delta$ be as in Theorem \ref{thm2}. Then there exists $C_b>0$ such that for any $\sigma>0$ and any solution $(u,v)\in\mathcal X^{\sigma,0,b}$ to the Cauchy problem \eqref{ssys} on the time interval $[0,\delta]$, we have 
\begin{equation}\label{acl00}
\sup_{t\in[0,\delta]}\|(u,v)(t)\|^2_{\mathcal G^{\sigma,0}}\le\|(u,v)(0)\|^2_{\mathcal G^{\sigma,0}}+C_b\sigma^\rho\|(u,v)\|^3_{\mathcal X^{\sigma,0,b}_\delta}.
\end{equation}
\end{thm}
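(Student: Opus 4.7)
The plan is a weighted $L^2$ energy method for the analytic transforms $U:=e^{\sigma|\partial_x|}u$ and $V:=e^{\sigma|\partial_x|}v$. Applying $e^{\sigma|\partial_x|}$ to \eqref{ssys} and splitting each quadratic nonlinearity via $e^{\sigma|\partial_x|}(\phi\psi)=(e^{\sigma|\partial_x|}\phi)(e^{\sigma|\partial_x|}\psi)+[e^{\sigma|\partial_x|}(\phi\psi)-(e^{\sigma|\partial_x|}\phi)(e^{\sigma|\partial_x|}\psi)]$ gives the shifted system
\begin{align*}
U_t+a_1U_{xxx}&=\tfrac{c_{11}}{2}(U^2)_x+\tfrac{c_{12}}{2}(V^2)_x+f_1(u,v),\\
V_t+a_2V_{xxx}&=c_{21}U_xV+c_{22}UV_x+f_2(u,v),
\end{align*}
in which $f_1$ and $f_2$ are exactly the commutator nonlinearities controlled by Lemma \ref{f}.

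Next I would derive the differential energy identity for $E(t):=\alpha\|U(t)\|_{L^2}^2+\beta\|V(t)\|_{L^2}^2$ with positive weights $\alpha,\beta$ still to be chosen. Multiplying the first equation by $\alpha U$, the second by $\beta V$, integrating in $x$ and summing, the Airy contributions and $\int U(U^2)_x=\tfrac13\int(U^3)_x$ vanish by integration by parts, leaving the single polynomial residue
\[
\Bigl[-\tfrac{\alpha c_{12}}{2}+\tfrac{\beta}{2}(2c_{21}-c_{22})\Bigr]\int_{\mathbb R} U_xV^2\,dx
\]
together with the commutator pairings $\alpha\int Uf_1\,dx+\beta\int Vf_2\,dx$. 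The role of Table \ref{table3} is precisely to guarantee that positive weights exist annihilating the bracketed coefficient in every relevant case: $\alpha=\beta=1$ handles the Majda--Biello system (including the divergence-form row $c_{21}=c_{22}$, where the $V$ equation simplifies to $V_t+a_2V_{xxx}=c_{21}(UV)_x+f_2$), $\alpha=3,\beta=c_{12}$ handles Hirota--Satsuma using $c_{12}>0$, and the trivially decoupled row $c_{12}=c_{21}=c_{22}=0$ needs no adjustment. With such a choice the identity collapses to $\tfrac{d}{dt}E(t)=2\alpha\int Uf_1\,dx+2\beta\int Vf_2\,dx$.

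To close the estimate I would integrate in $t\in[0,\delta]$ and handle each pairing by the Parseval/dual pairing between $X^{0,1-b}$ and $X^{0,b-1}$:
\[
\Bigl|\int_0^t\!\!\int_{\mathbb R} Uf_1\,dx\,ds\Bigr|\le\|\chi_{[0,\delta]}U\|_{X_u^{0,1-b}}\,\|\chi_{[0,\delta]}f_1\|_{X_u^{0,b-1}}.
\]
Since $b\in(1/2,1)$ forces $1-b$ and $b-1$ into $(-1/2,1/2)$, Lemma \ref{lem3} removes the cut-offs; the embedding $X_u^{0,b}\hookrightarrow X_u^{0,1-b}$ (valid because $b>1-b$) then gives $\|\chi_{[0,\delta]}U\|_{X_u^{0,1-b}}\lesssim_b\|u\|_{X_{u,\delta}^{\sigma,0,b}}$, while extending $(u,v)$ from $(0,\delta)\times\mathbb R$ to all of $\mathbb R^{1+1}$ with comparable $\mathcal X^{\sigma,0,b}$-norm and invoking Lemma \ref{f} yields $\|f_1\|_{X_{u,\delta}^{0,b-1}}+\|f_2\|_{X_{v,\delta}^{0,b-1}}\le C_{\rho,b}\sigma^\rho\|(u,v)\|_{\mathcal X^{\sigma,0,b}_\delta}^2$. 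The same argument handles the $Vf_2$ pairing. Collecting and taking the supremum in $t$ delivers $\sup_{t\in[0,\delta]}E(t)\le E(0)+C_b\sigma^\rho\|(u,v)\|_{\mathcal X^{\sigma,0,b}_\delta}^3$, after which the two-sided comparison $E\asymp\|(u,v)\|_{\mathcal G^{\sigma,0}}^2$ (with constants absorbable into $C_b$) gives \eqref{acl00}. I expect the delicate step to be the case-by-case algebraic cancellation fixing $\alpha,\beta$ in each row of Table \ref{table3}; the duality and commutator bookkeeping afterwards is routine once Lemma \ref{f} is in hand.
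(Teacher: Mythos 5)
Your proposal is correct and follows essentially the same route as the paper: pass to $U=e^{\sigma|\partial_x|}u$, $V=e^{\sigma|\partial_x|}v$, take a weighted $L^2$ energy identity in which the Airy and pure-power terms vanish and the remaining cubic residue $\int U_xV^2\,dx$ is cancelled by the choice of weights permitted by Table \ref{table3} (the paper uses the combination $\|U\|^2+\eta\|V\|^2$, i.e.\ your $(\alpha,\beta)=(1,\eta)$), and then estimate $\int Uf_1+\int Vf_2$ by the $X^{0,1-b}$--$X^{0,b-1}$ duality together with Lemma \ref{lem3}, the embedding $X^{0,b}\subset X^{0,1-b}$, and Lemma \ref{f}. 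Your explicit sign bookkeeping for the weights (e.g.\ $\alpha=\beta=1$ for Majda--Biello, $\alpha=3$, $\beta=c_{12}>0$ for Hirota--Satsuma, which keeps the energy positive definite) is consistent with, and if anything slightly cleaner than, the paper's formulation of the condition $c_{12}=\eta(c_{22}-2c_{21})$.
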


\begin{proof}
Let $0 \le \delta' \le \delta$. Define $U(t,x) := e^{\sigma|\partial_x|} u(t,x)$ and $V(t,x) := e^{\sigma|\partial_x|} v(t,x)$. Applying $e^{\sigma|\partial_x|}$ to the system \eqref{ssys}, we obtain
\begin{equation}\label{f1}
U_t + a_1 U_{xxx} = c_{11} U U_x + c_{12} V V_x + f_1(u,v),
\end{equation}
\begin{equation}\label{f2}
V_t + a_2 V_{xxx} = c_{21} U_x V + c_{22} U V_x + f_2(u,v),
\end{equation}
where $f_1$ and $f_2$ are defined as in Lemma \ref{f}.

Multiplying both equations by $U$ and $V$, respectively, and integrating in space, we obtain
$$
\int_\mathbb RUU_tdx+a_1\int_\mathbb RUU_{xxx}dx=c_{11}\int_\mathbb RU^2U_xdx+c_{12}\int_\mathbb RUVV_xdx+\int_\mathbb RUf_1(u,v)dx
$$
and
$$
\int_\mathbb RVV_tdx+a_2\int_\mathbb RVV_{xxx}dx=
c_{21}\int_\mathbb RU_xV^2dx+c_{22}\int_\mathbb RUVV_xdx+\int_\mathbb RVf_2(u,v)dx.
$$
Applying integration by parts to the integrals involving $a_1,a_2$, or $c_{12}$, we obtain
$$
\frac12\int_\mathbb R(U^2)_tdx-\frac{a_1}2\int_\mathbb R(U_x^2)_xdx
=\frac{c_{11}}3\int_\mathbb R(U^3)_xdx+c_{12}\int_\mathbb RUVV_xdx+\int_\mathbb RUf_1(u,v)dx
$$
and
$$
\frac12\int_\mathbb R(V^2)_tdx-\frac{a_2}2\int_\mathbb R(V_x^2)_xdx
=-2c_{21}\int_\mathbb RUVV_xdx+c_{22}\int_\mathbb RUVV_xdx+\int_\mathbb RVf_2(u,v)dx.
$$
Here we may assume that $U,V$, and all their spatial derivatives decay to zero as $|x|\rightarrow \infty$.\footnote{This property can be shown by approximation using the monotone convergence theorem and the Riemann-Lebesgue lemma. See the argument in \cite{SS}, p. 1018.}
Under this assumption, the above equations reduce to
$$
\frac12\int_\mathbb R(U^2)_tdx
=c_{12}\int_\mathbb RUVV_xdx+\int_\mathbb RUf_1(u,v)dx
$$
and
$$
\frac12\int_\mathbb R(V^2)_tdx
=(c_{22}-2c_{21})\int_\mathbb RUVV_xdx+\int_\mathbb RVf_2(u,v)dx.
$$

Now, suppose the coefficients satisfy  
$c_{12} = \eta(c_{22} - 2c_{21})$ for some constant $\eta$.\footnote{For the Majda–Biello system, we take $\eta = -1$; for the Hirota–Satsuma system, we take $\eta = -\frac{c_{12}}{3}$.}
Then, multiplying the second equation by $\eta$ and summing the two equations yields
$$\frac12\int_\mathbb R(U^2)_tdx+\frac\eta2\int_\mathbb R(V^2)_tdx=\int_\mathbb R(Uf_1(u,v)+\eta Vf_2(u,v))dx.$$
Integrating in time over the interval $[0,\delta']$, we obtain
\begin{align*}
&\|u(\delta')\|_{G^{\sigma,0}}^2+\eta\|v(\delta')\|_{G^{\sigma,0}}^2\\&\qquad\le\|u(0)\|_{G^{\sigma,0}}^2+\eta\|v(0)\|_{G^{\sigma,0}}^2+2\left|\int_{\mathbb R^{1+1}}\chi_{[0,\delta']}(t)(Uf_1(u,v)+\eta Vf_2(u,v))dtdx\right|.
\end{align*}
Using H\"older's inequality, together with Lemmas \ref{lem3} and \ref{f}, we estimate the integral term as follows:
\begin{align*}
\Big|\int_{\mathbb R^{1+1}}&\chi_{[0,\delta']}(t)(Uf_1(u,v)+\eta Vf_2(u,v))dtdx\Big|\\
\le&\|\chi_{[0,\delta']}(t)U\|_{X_u^{0,1-b}}\|\chi_{[0,\delta']}(t)f_1(u,v)\|_{X_u^{0,b-1}}+\eta\|\chi_{[0,\delta']}(t)V\|_{X_v^{0,1-b}}\|\chi_{[0,\delta']}(t)f_2(u,v)\|_{X_v^{0,b-1}}\\
\lesssim_b&\Big(\|U\|_{X_{u,\delta'}^{0,1-b}}\|f_1(u,v)\|_{X_{u,\delta'}^{0,b-1}}+\eta\|V\|_{X_{v,\delta'}^{0,1-b}}\|f_2(u,v)\|_{X_{v,\delta'}^{0,b-1}}\Big)\\
\lesssim_b&\|(u,v)\|_{\mathcal X_{\delta'}^{\sigma,0,1-b}}\sigma^\rho\|(u,v)\|^2_{\mathcal X_{\delta'}^{\sigma,0,b}}.
\end{align*}
Since $1-b<b$, we conclude that
$$\sup_{t\in[0,\delta]}\|(u,v)(t)\|_{\mathcal G^{\sigma,0}}^2\lesssim_b\|(u,v)(0)\|_{\mathcal G^{\sigma,0}}^2+\sigma^\rho\|(u,v)\|_{\mathcal X_{\delta'}^{\sigma,0,b}}^3,$$
as desired.
\end{proof}

\section{Proof of Theorem \ref{thm1}}\label{sec5}
A system of the form \eqref{ssys} is invariant under the reflection $(t, x) \mapsto (-t, -x)$. Hence, it suffices to consider positive times only. By the embedding \eqref{emb}, the general case $s \in \mathbb{R}$ will be reduced to the case $s = 0$ at the end of this section.

\subsection{The case $s=0$}
Combining \eqref{acl00} and \eqref{lowe}, we first observe that
\begin{equation}\label{acl01}
\sup_{t \in [0, \delta]} \|(u,v)(t)\|_{\mathcal{G}^{\sigma,0}}^2 
\le \|(u,v)(0)\|_{\mathcal{G}^{\sigma,0}}^2 + C \sigma^\rho \|(u,v)(0)\|_{\mathcal{G}^{\sigma,0}}^3.
\end{equation}

Let $(u_0, v_0) = (u,v)(0) \in \mathcal{G}^{\sigma_0, 0}$ for some $\sigma_0 > 0$, and let $\delta$ be as in Theorem \ref{thm2}. For any arbitrarily large $T > 0$, our goal is to show that the solution $(u,v)$ to \eqref{ssys} satisfies
$$(u,v)(t) \in \mathcal{G}^{\sigma(t), 0} \quad \text{for all } t \in [0, T],$$
where
\begin{equation}\label{sigma}
\sigma(t) \ge \frac{c}{T^{4/3 + \epsilon}}
\end{equation}
for some constant $c > 0$ depending on $\|(u_0, v_0)\|_{\mathcal{G}^{\sigma_0, 0}}$ and $\sigma_0$.
To establish this, fix $T > 0$ arbitrarily. It suffices to show that
\begin{equation}\label{label}
\sup_{t \in [0, T]} \|(u,v)(t)\|_{\mathcal{G}^{\sigma,0}}^2 
\le 2 \|(u_0, v_0)\|_{\mathcal{G}^{\sigma_0, 0}}^2
\end{equation}
for some $\sigma$ satisfying \eqref{sigma}. This will imply that $(u,v)(t) \in \mathcal{G}^{\sigma(t), 0}$ for all $t \in [0, T]$, as desired.

To prove \eqref{label}, we choose $n \in \mathbb{Z}^+$ such that $n\delta \le T \le (n+1)\delta$. Using induction, we will show that for each $k \in \{1,2,\ldots,n+1\}$,
\begin{equation}\label{label1}
\sup_{t \in [0, k\delta]} \|(u,v)(t)\|_{\mathcal{G}^{\sigma,0}}^2 
\le \|(u_0, v_0)\|_{\mathcal{G}^{\sigma,0}}^2 + k C \sigma^\rho 2^{3/2} \|(u_0, v_0)\|_{\mathcal{G}^{\sigma_0, 0}}^3,
\end{equation}
and
\begin{equation}\label{label2}
\sup_{t \in [0, k\delta]} \|(u,v)(t)\|_{\mathcal{G}^{\sigma,0}}^2 
\le 2 \|(u_0, v_0)\|_{\mathcal{G}^{\sigma_0, 0}}^2,
\end{equation}
provided $\sigma$ satisfies
\begin{equation}\label{label3}
\sigma \le \sigma_0 
\quad \text{and} \quad 
\frac{2T}{\delta} C \sigma^\rho 2^{3/2} \|(u_0, v_0)\|_{\mathcal{G}^{\sigma_0, 0}} \le 1.
\end{equation}

We begin with the base case $k = 1$. From \eqref{acl01}, we have
\begin{align*}
\sup_{t \in [0, \delta]} \|(u,v)(t)\|_{\mathcal{G}^{\sigma,0}}^2 
&\le \|(u_0, v_0)\|_{\mathcal{G}^{\sigma,0}}^2 
+ C \sigma^\rho \|(u_0, v_0)\|_{\mathcal{G}^{\sigma,0}}^3 \\
&\le 2 \|(u_0, v_0)\|_{\mathcal{G}^{\sigma_0, 0}}^2,
\end{align*}
where we used $\|(u_0, v_0)\|_{\mathcal{G}^{\sigma,0}} \le \|(u_0, v_0)\|_{\mathcal{G}^{\sigma_0,0}}$ and $C \sigma^\rho \|(u_0, v_0)\|_{\mathcal{G}^{\sigma_0,0}} \le 1$, which follow from \eqref{label3}.

Now suppose that \eqref{label1} and \eqref{label2} hold for some $k \in \{1,2,\ldots,n\}$. Applying \eqref{acl01} at $t = k\delta$, and using the induction hypotheses \eqref{label2} and \eqref{label1}, we obtain
\begin{align*}
\sup_{t \in [k\delta, (k+1)\delta]} \|(u(t), v(t))\|_{\mathcal{G}^{\sigma,0}}^2 
&\le \|(u,v)(k\delta)\|_{\mathcal{G}^{\sigma,0}}^2 
+ C \sigma^\rho \|(u,v)(k\delta)\|_{\mathcal{G}^{\sigma,0}}^3 \\
&\le \|(u,v)(k\delta)\|_{\mathcal{G}^{\sigma,0}}^2 
+ C \sigma^\rho 2^{3/2} \|(u_0, v_0)\|_{\mathcal{G}^{\sigma_0,0}}^3 \\
&\le \|(u_0, v_0)\|_{\mathcal{G}^{\sigma,0}}^2 
+ C \sigma^\rho (k+1) 2^{3/2} \|(u_0, v_0)\|_{\mathcal{G}^{\sigma_0,0}}^3.
\end{align*}
Combining this with the induction hypothesis \eqref{label1}, we deduce
\begin{equation}\label{label4}
\sup_{t \in [0, (k+1)\delta]} \|(u,v)(t)\|_{\mathcal{G}^{\sigma,0}}^2 
\le \|(u_0, v_0)\|_{\mathcal{G}^{\sigma,0}}^2 
+ C \sigma^\rho (k+1) 2^{3/2} \|(u_0, v_0)\|_{\mathcal{G}^{\sigma_0,0}}^3,
\end{equation}
which proves \eqref{label1} for $k+1$.
Moreover, since $k+1 \le n+1 \le T/\delta + 1 \le 2T/\delta$, the condition \eqref{label3} implies
\[
C \sigma^\rho (k+1) 2^{3/2} \|(u_0, v_0)\|_{\mathcal{G}^{\sigma_0,0}} 
\le \frac{2T}{\delta} C \sigma^\rho 2^{3/2} \|(u_0, v_0)\|_{\mathcal{G}^{\sigma_0,0}} 
\le 1,
\]
and hence, by \eqref{label4}, the bound \eqref{label2} holds for $k+1$.

Finally, the condition \eqref{label3} is satisfied by choosing
\[
\sigma = \left( \frac{\delta}{C 2^{5/2} \|(u_0, v_0)\|_{\mathcal{G}^{\sigma_0,0}}} \right)^{1/\rho} \cdot T^{-1/\rho}.
\]
Since we are free to choose any $0 \le \rho < 3/4$, we obtain \eqref{sigma} with
\[
c = \left( \frac{\delta}{C 2^{5/2} \|(u_0, v_0)\|_{\mathcal{G}^{\sigma_0,0}}} \right)^{1/\rho},
\]
which depends only on $\|(u_0, v_0)\|_{\mathcal{G}^{\sigma_0,0}}$.

\subsection{The general case $s \in \mathbb{R}$}
Recall from the embedding \eqref{emb} that
\[
\mathcal{G}^{\sigma, s} \subset \mathcal{G}^{\sigma', s'} \quad \text{for all } \sigma > \sigma' \ge 0 \text{ and } s, s' \in \mathbb{R}.
\]
In particular, for any $s \in \mathbb{R}$, we have
\[
(u_0, v_0) \in \mathcal{G}^{\sigma_0, s} \subset \mathcal{G}^{\sigma_0/2, 0}.
\]
From the local well-posedness result, it follows that there exists $\delta = \delta(\|(u_0, v_0)\|_{\mathcal{G}^{\sigma_0/2, 0}})$ such that
\[
(u,v)(t) \in \mathcal{G}^{\sigma_0/2, 0} \quad \text{for } 0 \le t \le \delta.
\]
Arguing as in the case $s = 0$, for any $T > \delta$, we obtain
\[
(u,v)(t) \in \mathcal{G}^{\sigma', 0} \quad \text{for } t \in [0, T],
\]
where $\sigma' \ge c/T$ for some constant $c > 0$ depending on $\|(u_0, v_0)\|_{\mathcal{G}^{\sigma_0/2, 0}}$ and $\sigma_0$.
Applying the embedding once more, we conclude that
\[
(u,v)(t) \in \mathcal{G}^{\sigma, s} \quad \text{for } t \in [0, T],
\]
where $\sigma = \sigma'/2$.
\qed

\section*{Acknowledgments}
The authors thank the anonymous referees for their careful reading of the manuscript and for their helpful comments, which have helped improve the presentation of the paper.

\end{document}